\theoremstyle{plain}
\newtheorem{thm}{Theorem}
\newtheorem{lem}[thm]{Lemma}
\newtheorem{cor}[thm]{Corollary}
\theoremstyle{definition}
\newtheorem{defe}[thm]{Definition}
\theoremstyle{remark}
\newtheorem{rem}[thm]{Remark}
\newcommand{\nc}{\newcommand}
\renewcommand{\leq}{\leqslant}
\nc{\ssec}{\subsection}
\nc {\fX}{\mathfrak{X}}
\nc {\tY}{\mathtt{Y}}
\nc {\fU}{\mathfrak{U}}
\nc {\fR}{\mathfrak{R}}
\nc {\cP}{\mathcal{P}}
\nc {\cI}{\mathcal{I}}
\nc {\cT}{\mathcal{T}}
\nc {\bQ}{\mathbb{Q}}
\nc {\Irr} {\mathrm{Irr}}
\nc {\Aut} {\mathrm{Aut}}
\nc {\GL} {\mathrm{GL}}
\nc {\PGL} {\mathrm{PGL}}
\nc {\rank} {\mathrm{rank}}
\nc {\Hom}{\mathrm{Hom}}
\nc {\Spec}{\mathrm{Spec} \,}
\nc{\bbG}{{\widehat{\Psi}}}
\nc{\bbP}{\mathbb{P}}
\nc{\bbH}{\mathbb{H}}
\nc{\bbT}{\mathbb{T}}
\nc{\bbC}{\mathbb{C}}
\nc{\oFp}{\overline{\mathbb{F}_p}}
\nc{\bC}{\mathbb{C}}
\nc{\bG}{\mathbf{G}}
\nc{\bY}{\mathbf{Y}}
\nc{\bZ}{\mathbf{Z}}
\nc{\bB}{\mathbf{B}}
\nc{\bT}{\mathbf{T}}
\nc{\bS}{\mathbf{S}}
\nc{\bH}{\mathbf{H}}
\nc{\bK}{\mathbf{K}}
\nc{\bX}{\mathbf{X}}
\nc{\bR}{\mathbf{R}}
\nc{\bA}{\mathbf{A}}
\nc {\cM}{\mathcal{M}}
\nc {\bF}{\mathbb{F}}
\nc{\Fq}{{\mathbb{F}_q}}
\nc {\bbZ}{\mathbb{Z}}
\nc {\bbR}{\mathbb{R}}
\nc {\bbQ}{\mathbb{Q}}
\nc {\bbX}{\mathbb{X}}
\nc {\tG}{\mathscr{G}}
\nc {\tB}{\mathscr{B}}
\nc {\tZ}{\mathscr{Z}}
\nc {\tT}{\mathscr{T}}
\nc {\tX}{\mathscr{X}}
\nc {\tH}{\mathtt{H}}
\nc {\ra}{\rightarrow}
\begin{document} 
\title{Character stacks are PORC count} 
 
\author{Nick Bridger} 
\author{Masoud Kamgarpour}


\address{School of Mathematics and Physics, The University of Queensland} 
\email{nicholas.bridger@uq.net.au}
\email{masoud@uq.edu.au}

\date{\today}

\begin{abstract} 
We compute the number of points over finite fields of the character stack associated to a compact surface group and a reductive group with connected centre. We find that the answer is a Polynomial On Residue Classes (PORC). The key ingredients in the proof are Lusztig's Jordan decomposition of complex characters of finite reductive groups and Deriziotis's results on their genus numbers . As a consequence of our main theorem, we obtain an expression for the $E$-polynomial of the character stack. 
\end{abstract} 

\keywords{Character variety, character stack, $E$-polynomial, polynomial count, counting solutions in finite groups, representation $\zeta$-function, Lusztig's Jordan decomposition}
\maketitle 

\tableofcontents

\section{Introduction} 
Let $\Gamma$ be the fundamental group of a Riemann surface and $G$ a reductive group.  
The \emph{character stack} associated to $(\Gamma, G)$ is  the quotient stack 
\begin{equation} \label{eq:charStack}
\fX \colonequals [\Hom(\Gamma,G)/G].
\end{equation} 
 This space and its cousins (the character variety, moduli of stable Higgs bundles and moduli of flat connections) play a central role in diverse areas of mathematics such as non-abelian Hodge theory \cites{SimpsonICM, SimpsonModuli} and the geometric Langlands program \cites{BD, BenZviNadler}. 
 
 The study of the topology and geometry of these spaces has been a subject of active research for decades. 
In their ground breaking work \cite{HRV}, Hausel and Villegas counted points on the character stack associated to the once-punctured surface group and $G=\GL_n$, where the loop around the punctured is mapped to a primitive root of unity. This gave rise to much further progress in understanding the arithmetic geometry of character stacks, cf. 
\cites{HLRV, dCHM, Letellier, Mereb, BaragliaHekmati, Mellit, LetellierRodriguez, Ballandras}. 

Almost all the previous work in this area concerns the case when $G=\GL_n$, $\mathrm{SL}_n$, or  $\PGL_n$. The only exception we know of is the unpublished thesis \cite{Cambo}. Note that from the point of view of Langlands correspondence, it is crucial to understand character stacks of all reductive groups, for Langlands central conjecture, functoriality, concerns relationship between automorphic functions (or sheaves) of different reductive groups. 

The purpose of this paper is to study the arithmetic geometry of the character stack associated to a compact surface group and an arbitrary reductive group $G$ with connected centre. This represents the first step in generalising the program of Hausel--Letellier--Villegas \cites{HRV, HLRV, Letellier, LetellierRodriguez}  from type $A$ to arbitrary type in a uniform manner.

\subsection{Main result} To state our main result, we need a definition regarding counting problems whose solutions are Polynomial On Residue Classes (PORC), cf. \cite{Higman}.

\subsubsection{}  
Let $Y$ be a map from finite fields to finite groupoids.  For instance, $Y$ can be a scheme or a stack of finite type over $\bbZ$. We write $|Y(\Fq)|$ for the groupoid cardinality of $Y(\Fq)$; i.e., 
\[
|Y(\Fq)| \colonequals \sum_{y\in Y(\Fq)} \frac{1}{|\mathrm{Aut}(y)|}.
\]

\begin{defe} \label{d:PORC}
We say $Y$ is \emph{PORC count} if there exists an integer $d$, called the \emph{modulus},  and polynomials $|\!|Y|\!|_0, \cdots, |\!|Y|\!|_{d-1}\in \mathbb{C}[t]$ such that 
\[
|Y(\Fq)| = |\!|Y|\!|_i(q),\qquad \forall \,\,q\equiv i \mod d. 
\]
\end{defe} 

For instance, $\mathrm{Spec}\,  \mathbb{Z}[x]/(x^2+1) $ is PORC count with modulus $4$ 
and counting polynomials $|\!|X|\!|_{0}=|\!|X|\!|_{2}=1$, $|\!|X|\!|_{1}=2$, and $|\!|X|\!|_{3}=0$.

\subsubsection{} 
Now let $\Gamma_g$ be the fundamental group of a compact Riemann surface of genus $g\geq 1$, $G$ a connected (split) reductive group over $\bbZ$,  $G^\vee$ the (Langlands) dual group, and
$\fX$ the character stack associated to $(\Gamma_g, G)$ as in \eqref{eq:charStack}.   

 \begin{thm} \label{t:main} If $G$ has connected centre, then  $\fX$ is PORC count with the modulus $d(G^\vee)$ and counting polynomials $|\!|\fX|\!|_0,\cdots |\!|\fX|\!|_{d(G^\vee)-1}$ defined in, respectively, Definitions \ref{d:modulus} and \ref{d:main}\footnote{The expression for the counting polynomials is explicit in so far as the genus numbers are explicit; see \S \ref{s:genus}.}.
 \end{thm}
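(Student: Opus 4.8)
The plan is to count points via Frobenius character theory, following the classical approach of Frobenius and Mednykh for counting homomorphisms from a surface group into a finite group. Fix a finite field $\Fq$ and write $G(\Fq)$ for the finite group of rational points. The groupoid cardinality of $\fX(\Fq) = [\Hom(\Gamma_g, G(\Fq))/G(\Fq)]$ is
\[
|\fX(\Fq)| = \frac{|\Hom(\Gamma_g, G(\Fq))|}{|G(\Fq)|},
\]
and the Frobenius formula expresses the numerator as $|G(\Fq)|^{2g-1} \sum_{\chi \in \Irr(G(\Fq))} \chi(1)^{2-2g}$. Hence
\[
|\fX(\Fq)| = |G(\Fq)|^{2g-2} \sum_{\chi \in \Irr(G(\Fq))} \chi(1)^{2-2g},
\]
so everything reduces to understanding the representation zeta function $\zeta_{G(\Fq)}(2g-2) = \sum_\chi \chi(1)^{2-2g}$ as a function of $q$.

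The key step is to control $\Irr(G(\Fq))$ using Lusztig's Jordan decomposition. Since $G$ has connected centre, the dual group $G^\vee(\Fq)$ has the property that centralizers of semisimple elements are connected, and Lusztig's Jordan decomposition gives a bijection between $\Irr(G(\Fq))$ and pairs $(s, \psi)$ where $s$ runs over semisimple conjugacy classes in $G^\vee(\Fq)$ and $\psi$ is a unipotent character of the centralizer $C_{G^\vee(\Fq)}(s)$, with $\chi(1) = [G^\vee(\Fq) : C_{G^\vee(\Fq)}(s)]_{p'} \cdot \psi(1)$. Grouping the sum over $\chi$ by the $G^\vee(\Fq)$-conjugacy class of $s$, one gets
\[
\zeta_{G(\Fq)}(2g-2) = \sum_{[s]} \left( \frac{|G^\vee(\Fq)|_{p'}}{|C_{G^\vee(\Fq)}(s)|_{p'}} \right)^{2-2g} \zeta^{\mathrm{un}}_{C_{G^\vee(\Fq)}(s)}(2g-2),
\]
where $\zeta^{\mathrm{un}}$ denotes the sum over unipotent characters. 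The unipotent characters and their degrees depend only on the type of the (possibly disconnected reductive) centralizer as a polynomial in $q$, via Lusztig's classification, and the number of semisimple classes $[s]$ with a given centralizer type is exactly what Deriziotis's genus numbers compute. So the plan is: (i) stratify semisimple classes of $G^\vee(\Fq)$ by the $G^\vee$-conjugacy type of their centralizer, a finite set of strata indexed by ($W$-orbits of) pseudo-Levi subgroups / subsystems of the root system; (ii) on each stratum, the contribution to $\zeta_{G(\Fq)}(2g-2)$ is (genus number) $\times$ (a fixed rational function of $q$ coming from unipotent character degrees and the index); (iii) the genus number counting the size of each stratum is, by Deriziotis's results, a polynomial on residue classes in $q$ — this is where the modulus $d(G^\vee)$ enters, as it records the congruence conditions determining which subsystem subgroups are "rational" and how many $\Fq$-points of each occur.

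Assembling these pieces, $|\fX(\Fq)| = |G(\Fq)|^{2g-2} \zeta_{G(\Fq)}(2g-2)$ becomes a finite sum of terms, each a polynomial-on-residue-classes times an honest polynomial in $q$ (note $|G(\Fq)|$ is itself polynomial in $q$, and the $p'$-part factors cancel suitably because $g \geq 1$ forces nonnegative exponents after combining — one must check the powers of $q$ bookkeeping so no genuine denominators survive), hence is itself PORC in $q$ with modulus dividing $d(G^\vee)$. The main obstacle I expect is step (iii) together with the bookkeeping in (ii): precisely matching Deriziotis's parametrization of semisimple classes and their centralizers to a clean PORC statement, handling the disconnectedness of centralizers correctly (even with connected centre in $G$, pseudo-Levi subgroups of $G^\vee$ arising as centralizers of semisimple elements are connected here — that is exactly the payoff of the connected-centre hypothesis, and it must be invoked carefully), and verifying that the unipotent-character zeta function of each such centralizer is a single rational function of $q$ valid uniformly. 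The definition of the modulus $d(G^\vee)$ and the explicit counting polynomials are then packaged in Definitions \ref{d:modulus} and \ref{d:main}.
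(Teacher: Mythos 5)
Your proposal is correct and follows essentially the same route as the paper: Frobenius's formula, Lusztig's Jordan decomposition (valid by the connected-centre hypothesis, which also forces centralisers of semisimple elements in $G^\vee$ to be connected), Lusztig's polynomial degrees for unipotent characters, and Deriziotis's PORC genus numbers to handle the $q$-dependence of the stratification by centraliser type. The bookkeeping you flag is handled in the paper by the identity $|\bG(\Fq)|/\chi(1)=q^{r(x)}\,|\bG_x^\vee(\Fq)|/\rho(1)$ together with the fact that $\mathrm{Deg}(\rho)$ divides the order polynomial, so no denominators survive.
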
 

This theorem refines \cite{LS}*{Theorem 1.2} which gave an asymptotic for $|\fX(\Fq)|$. 

\subsubsection{} \label{sss:Frobenius} Let us outline the proof of the theorem. First, using Lang's theorem, 
it is easy (cf. \cite{Behrend}*{2.5.1})  to show that 
\begin{equation} 
|\fX(\Fq)|=|\Hom(\Gamma_g, G(\Fq))/G(\Fq)|.
\end{equation}
Thus, our goal is to count the number of homomorphisms $\Gamma_g\ra G(\Fq)$; i.e.,  the number of solutions to the equation 
$[x_1,y_1]\cdots [x_g, y_g]=1$ 
in the finite group $G(\Fq)$. 

\subsubsection{} 
Next, a theorem going back to Frobenius (cf. \cite{HRV}*{\S 2.3}) states that  
\begin{equation} \label{eq:Frobenius} 
|\Hom(\Gamma_g, G(\Fq))/G(\Fq)| = \sum_{\chi \in \Irr(G(\Fq))} \bigg(\frac{|G(\Fq)|}{\chi(1)}\bigg)^{2g-2}. 
\end{equation} 
Here $\Irr(G(\Fq))$ denote the set of irreducible complex characters of $G(\Fq)$. 
Thus, computing $|\fX(\Fq)|$ is a problem in complex representation theory of finite reductive groups.

\subsubsection{} According to Lusztig's Jordan decomposition \cite{Lusztig84}, there is a bijection between $\Irr(G(\Fq))$ and the set of pairs $([s], \rho)$ consisting of conjugacy classes $[s]$ of semisimple elements in the dual group $G^\vee(\Fq)$ and irreducible unipotent characters $\rho$ of the centraliser $G^\vee_s(\Fq)$.  The parameterisation and degrees of unipotent representations were also determined by Lusztig \cite{Lusztig84}. Hence, it remains to understand centralisers of semisimple elements of $G^\vee(\Fq)$.

\subsubsection{} The final  ingredient in the proof is results of Carter and Deriziotis on centralisers of semisimple elements and genus numbers \cites{Carter1978, Deriziotis85}. The notion of ``genus'' is a reductive generalisation of Green's notion of ``type". The latter is ubiquitous in point counts on character varieties in type $A$ \cites{HRV, HLRV, Letellier, Mereb, LetellierRodriguez}. The term ``genus number" refers to the number of conjugacy classes of semisimple elements whose centraliser is in the same conjugacy class. 
The fact \cite{Deriziotis85} that genus numbers of reductive groups (with connected centre) are PORC is the reason that character stacks are PORC count.

\subsubsection{Remarks} 
\begin{enumerate} 
\item[(i)] 
It is well-known that for every $\chi\in \Irr(G(\Fq))$, the quotient $|G(\Fq)|/\chi(1)$ is a polynomial in $q$, cf. \cite{GeckMalle}*{Rem. 2.3.27}. Theorem \ref{t:main} is, however, not trivial because the sum in \eqref{eq:Frobenius} is over a set which  depends on $q$. 
\item[(ii)] Consider the representation $\zeta$-function of $G(\Fq)$ defined by 
\[
\zeta_{G(\Fq)}(s) \colonequals \sum_{\chi\in \Irr(G(\Fq))} \chi(1)^{-s}. 
\]
Then Frobenius' theorem \eqref{eq:Frobenius} can be reformulated as 
\[
|\Hom(\Gamma_g, G(\Fq))/G(\Fq)|= \zeta_{G(\Fq)}(2g-2) |G(\Fq)|^{2g-2}.
\] 
Our approach gives an explicit expression for $\zeta_{G(\Fq)}(s)$ for any $s$; see \ref{sss:zeta}. 
\item[(iii)] Note that $d(\GL_n)=1$ thus the $\GL_n$-character stack is polynomial count (see below). 
\end{enumerate}

\subsection{Consequences} We now discuss some of the corollaries of our main theorem. 
Recall the following:
\begin{defe} \label{d:poly}
An algebraic stack $Y$ of finite type over $\Fq$ is called \emph{polynomial count}\footnote{For stacks, it is more natural to consider \emph{rational count} objects \cite{LetellierRodriguez}, but it turns out that all the stacks we consider are actually polynomial count, so we restrict to this case.}  if there exists a polynomial $|\!|Y|\!|$ such that 
\[
|Y(\mathbb{F}_{q^n})|=|\!|Y|\!|(q^n),\qquad \forall n \in \mathbb{N}.  
\]
\end{defe} 
By \cite{LetellierRodriguez}*{Theorem 2.8}, if a quotient stack $Y=[R/G]$, with $G$ connected, is polynomial count, then the $E$-series of $Y$ is a well-defined polynomial  and it equals $|\!|Y|\!|$. In particular, one finds that the dimension, the number of irreducible components of maximal dimension, and the Euler characteristic\footnote{By the Euler characteristic of the stack $Y$, we mean the alternating sum of dimensions of compactly supported cohomology groups $H_c^i(Y_{\overline{\Fq}}; \overline{\mathbb{Q}_\ell})$, when this sum makes sense.} of $Y$ equal, respectively, the degree, the leading coefficient, and the value at $1$ of the polynomial $|\!|Y|\!|$.

\subsubsection{} Now let $\fX_\Fq \colonequals \fX\otimes_\bbZ \Fq$. As an immediate corollary of our main theorem, we obtain: 

\begin{cor} Suppose $q\equiv 1 \mod d(G^\vee)$. Then $\fX_\Fq$ is  polynomial count with counting polynomial $|\!|\fX|\!|_1$.\footnote{If $q$ is co-prime to $d(G^\vee)$,  then $\fX_\Fq$ becomes polynomial count after a finite base change.} Thus, the $E$-series of $\fX$ equals $|\!|\fX|\!|_1$. 
\end{cor}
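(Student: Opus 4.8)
The plan is to deduce the corollary directly from Theorem \ref{t:main} together with the general machinery of \cite[Theorem 2.8]{LetellierRodriguez}. First I would observe that by Theorem \ref{t:main}, for every prime power $q$ with $q\equiv 1 \bmod d(G^\vee)$ one has $|\fX(\Fq)| = |\!|\fX|\!|_1(q)$. The point requiring a small argument is that polynomial count is a statement about \emph{all} powers $q^n$, $n\in\mathbb{N}$ (Definition \ref{d:poly}), not just a single field: here one uses that if $q\equiv 1 \bmod d(G^\vee)$, then $q^n\equiv 1 \bmod d(G^\vee)$ for every $n\geq 1$, so the same counting polynomial $|\!|\fX|\!|_1$ computes $|\fX(\mathbb{F}_{q^n})| = |\fX_\Fq(\mathbb{F}_{q^n})|$ for all $n$. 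Hence $\fX_\Fq$ is polynomial count with counting polynomial $|\!|\fX|\!|_1$.

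For the $E$-series statement, I would invoke \cite[Theorem 2.8]{LetellierRodriguez}: since $\fX_\Fq = [\Hom(\Gamma_g, G)_\Fq / G_\Fq]$ is a quotient stack by the connected group $G$ (which is connected reductive by hypothesis, and in particular $\Hom(\Gamma_g,G)$ is a scheme of finite type with $G$ acting), the cited theorem applies verbatim: a polynomial-count quotient stack by a connected group has a well-defined $E$-series equal to its counting polynomial. Therefore the $E$-series of $\fX_\Fq$ — and so of $\fX$, via the identification discussed before Definition \ref{d:poly} — is $|\!|\fX|\!|_1$.

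For the footnote claim, the plan is: if $q$ is merely coprime to $d = d(G^\vee)$, let $e$ be the multiplicative order of $q$ in $(\bbZ/d\bbZ)^\times$; then $q^e \equiv 1 \bmod d$, so replacing $\Fq$ by $\mathbb{F}_{q^e}$ (a finite base change) lands us in the previous case, and $\fX_{\mathbb{F}_{q^e}}$ is polynomial count with the same polynomial $|\!|\fX|\!|_1$.

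The only genuine subtlety — and the step I would be most careful about — is checking that the hypotheses of \cite[Theorem 2.8]{LetellierRodriguez} are literally met by $\fX_\Fq$: namely that $\fX$ is presented as $[R/G]$ with $G$ connected and $R$ of finite type, so that the abstract equivalence between polynomial count and the $E$-series being a polynomial can be applied. Since $G$ is connected reductive over $\bbZ$ and $\Hom(\Gamma_g,G)$ is cut out inside $G^{2g}$ by the single equation $[x_1,y_1]\cdots[x_g,y_g]=1$, this is immediate, so no real obstacle remains; the corollary is a formal consequence of the main theorem.
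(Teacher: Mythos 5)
Your proposal is correct and matches the paper's (implicit) argument: the paper states this as an immediate corollary, relying precisely on the observation that $q\equiv 1 \bmod d(G^\vee)$ implies $q^n\equiv 1 \bmod d(G^\vee)$ for all $n$, and on the discussion of \cite[Theorem 2.8]{LetellierRodriguez} for quotient stacks $[R/G]$ with $G$ connected given just before Definition \ref{d:poly}. Nothing further is needed.
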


\subsubsection{}  Let $\rank(G)$ denote the reductive rank of $G$. 
Analysing the leading term of the polynomial $|\!|\fX|\!|_1$, we find:  
\begin{cor}\label{c:main} Suppose $q\equiv 1 \mod d(G^\vee)$. Then 
\begin{enumerate} 
\item[(i)] If $g=1$ then $\dim(\fX_\Fq) = \rank  (G)$ and $\fX_\Fq$ has a unique irreducible component of maximal dimension. \label{itemi}
\item[(ii)] If $g>1$ then $\dim(\fX_\Fq) = (2g-2)\dim(G)+\dim(Z(G^\vee))$ and $\fX_\Fq$ has $|\pi_1([G,G])|$ irreducible components of maximal dimension. 
\end{enumerate} 
\end{cor}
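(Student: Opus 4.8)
The plan is to extract the leading term of the counting polynomial $|\!|\fX|\!|_1$ from the Frobenius formula \eqref{eq:Frobenius} combined with Lusztig's Jordan decomposition. Writing $|\Hom(\Gamma_g,G(\Fq))/G(\Fq)| = \sum_{\chi} (|G(\Fq)|/\chi(1))^{2g-2}$, and using that for each $\chi$ the ratio $|G(\Fq)|/\chi(1)$ is a polynomial in $q$ (Remark (i)), I would group characters by their Jordan datum $([s],\rho)$: the ratio $|G(\Fq)|/\chi(1)$ equals $|G^\vee_s(\Fq)|/\rho(1)$ up to the index $[G^\vee(\Fq):G^\vee_s(\Fq)]$, so its degree in $q$ is $\dim G - \dim(\text{the unipotent support of }\rho)$, maximized when $\rho$ is the trivial character of $G^\vee_s$, giving degree $\dim G$ for every semisimple $s$. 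Thus the dominant contribution to the sum, when $g>1$ so that $2g-2>0$, comes from the trivial-unipotent characters: one gets $|\!|\fX|\!|_1(q) \sim \big(\#\{[s]\}\big)\cdot q^{(2g-2)\dim G}\cdot(\text{correction})$, but one must be careful — the centre $Z(G^\vee)$ contributes a positive-dimensional family of central $s$ (those with $G^\vee_s = G^\vee$), and these give the true top-degree term.

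The key computation, then, is: for $g>1$, the top-degree part of $\sum_\chi(|G(\Fq)|/\chi(1))^{2g-2}$ is governed by central semisimple elements $s\in Z(G^\vee)(\Fq)$, for which $G^\vee_s = G^\vee$ and the unipotent characters $\rho$ of $G^\vee(\Fq)$ of top degree are exactly the linear characters, i.e. characters of $G^\vee(\Fq)/[G^\vee,G^\vee](\Fq)$; for these $|G(\Fq)|/\chi(1) = |G(\Fq)|$ has degree $\dim G$. Counting: $|Z(G^\vee)(\Fq)|$ has degree $\dim Z(G^\vee)$ in $q$ (for $q\equiv 1 \bmod d(G^\vee)$, the component group issues vanish and this is a genuine polynomial), and the number of linear characters of $G^\vee(\Fq)$ is $|\pi_1([G,G])|$ (this is the standard identification: $\Irr(G^\vee(\Fq)/[G^\vee,G^\vee](\Fq))$ has size equal to the order of the torsion of the cocharacter lattice quotient, which matches $|\pi_1([G,G])|$ under Langlands duality). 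So the leading term is $|\pi_1([G,G])|\cdot q^{(2g-2)\dim G + \dim Z(G^\vee)}$, yielding both the dimension formula and the count of top-dimensional components in part (ii). For part (i), when $g=1$ the exponent $2g-2 = 0$, so \emph{every} term in \eqref{eq:Frobenius} contributes $1$ and $|\fX(\Fq)| = |\Irr(G(\Fq))|$; by Jordan decomposition this is $\sum_{[s]}(\text{number of unipotent characters of }G^\vee_s(\Fq))$, whose dominant term in $q$ comes from the largest family of semisimple classes, namely those $[s]$ with $G^\vee_s = T^\vee$ a maximal torus — these are regular semisimple, number $\sim q^{\rank G}/|W|$ of them times $|W|$-ish unipotent-character counts, and the bookkeeping collapses to a single leading term $q^{\rank G}$, giving $\dim \fX_\Fq = \rank G$ with a unique top component.

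Concretely the steps are: (1) reduce via the Corollary to computing $\deg |\!|\fX|\!|_1$ and its leading coefficient; (2) split the Frobenius sum along Jordan decomposition and record, for each stratum indexed by $[s]$, the $q$-degree of $(|G^\vee_s(\Fq)|/\rho(1)\cdot[G^\vee(\Fq):G^\vee_s(\Fq)])^{2g-2}$ using Lusztig's explicit unipotent degrees; (3) for $g>1$, show the maximum degree is attained precisely on the locus of central $s$ paired with linear characters, using that $\dim G^\vee_s < \dim G^\vee$ forces a strictly smaller degree since $2g-2\geq 1$; (4) count that locus: $\dim$-many parameters from $Z(G^\vee)(\Fq)$ and $|\pi_1([G,G])|$ linear characters, invoking the Langlands-dual identification of $\pi_1([G,G])$ with linear characters of $G^\vee(\Fq)$; (5) for $g=1$, separately analyze $|\Irr(G(\Fq))|$ as a PORC/polynomial quantity and extract its leading term from the regular semisimple stratum. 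The main obstacle I anticipate is step (3)–(4): carefully controlling the interplay between the $q$-degree contributed by the \emph{number} of semisimple classes with a given centralizer type (the genus numbers, which are themselves only PORC) and the $q$-degree contributed by $(|G(\Fq)|/\chi(1))^{2g-2}$, and verifying that no non-central stratum with many classes but slightly smaller per-term degree can overtake the central stratum — this requires the inequality $(2g-2)(\dim G - \dim G^\vee_s) > \dim G^\vee_s - \mathrm{rank}\, G$ type estimate, which does hold for $g>1$ but needs the genus-number growth bound from \cite{Deriziotis85}. A secondary subtlety is ensuring the leading coefficient is exactly $|\pi_1([G,G])|$ and not an overcount, which comes down to the fact that distinct central elements of $Z(G^\vee)(\Fq)$ twisting the same linear character still give distinct irreducible characters, so there is no collision inflating or deflating the count.
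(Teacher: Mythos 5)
Your overall strategy is the paper's: extract the leading term of the counting polynomial from the Frobenius formula via Lusztig's Jordan decomposition and the genus numbers, with the dominant stratum being the central one for $g>1$ and the regular semisimple one for $g=1$. However, the two computations in which the actual content lives are both off.

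For $g>1$ you misidentify the top-degree characters. You assert that for central $s$ the unipotent characters of $G^\vee(\Fq)$ ``of top degree are exactly the linear characters,'' of which there are $|\pi_1([G,G])|$. There is exactly \emph{one} unipotent character of degree one, namely the trivial one; the nontrivial linear characters of $G^\vee(\Fq)$ are not unipotent, and their number is $|G^\vee(\Fq)^{\mathrm{ab}}|$, not $|\pi_1([G,G])|$. The characters $\chi$ of $G(\Fq)$ with $\deg\bigl(|G(\Fq)|/\chi(1)\bigr)=\dim G$ are precisely the linear characters of $G(\Fq)$ itself, i.e.\ the Jordan pairs (central $s$, trivial unipotent character), and there are $|Z(G^\vee)(\Fq)|$ of them. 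The coefficient $|\pi_1([G,G])|$ then arises because, for $q\equiv 1 \bmod d(G^\vee)$, the quantity $|Z(G^\vee)(\Fq)|$ is a polynomial of degree $\dim Z(G^\vee)$ with leading coefficient $|\pi_0(Z(G^\vee))|=|\pi_1([G,G])|$ --- not as a separate multiplicative factor on top of $|Z(G^\vee)(\Fq)|$. Your product (number of central $s$) times (number of linear characters) reaches the right answer only because you implicitly take the leading coefficient of $|Z(G^\vee)(\Fq)|$ to be $1$; as written the count is structurally a double count. (Your earlier claim that the ratio has degree $\dim G$ ``for every semisimple $s$'' when $\rho$ is trivial is also false: it has degree $\dim G - r(s)$, maximal only for central $s$.)

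The inequality you defer to in steps (3)--(4) is not the right one, and it is the whole point of the proof. Writing $n=2g-2$ and $\Phi_1$ for the root system of $G_s^\vee$, the degree of the stratum attached to a genus is $n\bigl(|\Phi^+|+|\Phi_1^+|+\rank X\bigr) + \bigl(\rank X - \rank\langle\Phi_1\rangle\bigr)$, the last summand being the degree of the genus number from \eqref{eq:degree}; so what must be shown is
\[
n\bigl(|\Phi^+|-|\Phi_1^+|\bigr) \;>\; \rank\langle\Phi^\vee\rangle - \rank\langle\Phi_1\rangle \qquad\text{for } \Phi_1\subsetneq\Phi^\vee,
\]
which holds because the positive roots outside $\Phi_1$ span $(\langle\Phi^\vee\rangle/\langle\Phi_1\rangle)\otimes\bbR$ and $n\geq 2$. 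Your estimate $(2g-2)(\dim G-\dim G^\vee_s) > \dim G^\vee_s - \rank G$ is a different statement that does not involve the genus-number degree at all, so it cannot rule out a non-central stratum overtaking the central one. Similarly, for $g=1$ the leading coefficient does not come from a single regular semisimple stratum: the regular classes split into genera $(\emptyset,[w])$ indexed by conjugacy classes of $W$, each twisted torus carries exactly one unipotent character (not ``$|W|$-ish''), each genus number has leading coefficient $1/|W_w|$, and the coefficient $1$ is the orbit--stabiliser identity $\sum_{[w]}1/|W_w|=1$. The skeleton of your argument is correct, but these computations need to be carried out as above for the proof to stand.
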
 

As observed in \cite{LS}*{Corollary 1.11}, the result holds without any assumption on $q$ because the Lang--Weil estimate implies that only the asymptotics of $|\fX(\Fq)|$ matters. Over the complex numbers, the above numerical invariants have also been understood from other perspectives; see the Appendix for further discussions.

\subsubsection{} The Euler characteristic of $\fX_\Fq$ is more subtle and has not been considered in the literature. In this direction, we have: 

\begin{cor}\label{c:Euler} 
Suppose $q\equiv 1 \mod d(G^\vee)$. Then 
\begin{enumerate} 
\item[(i)] If $g=1$ and $G$ is a simple adjoint group of type $G_2, F_4, E_6, E_7, E_8$, then $\chi(\fX_\Fq)$ equals $12, 56, 46, 237, 252$, respectively. 
\item[(ii)] If $g>1$ and $G$ is simple adjoint group of type $B_2$ or $G_2$, then $\chi(\fX_\Fq)$ equals $2^{8g-7}$ and $72^{2g-2} + 8^{2g-2} + 2\times 9^{2g-2}$, respectively. 
\item[(iii)] If $g>1$ then the Euler characteristic of the component of the $\PGL_n$-character stack associated to $1$ is equal to $\varphi(n)n^{2g-3}$, where $\varphi$ is the Euler  totient function. 
\end{enumerate} 
\end{cor}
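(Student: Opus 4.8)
The plan is to extract all three parts of Corollary~\ref{c:Euler} from the explicit counting polynomials $|\!|\fX|\!|_1$ (equivalently, from the explicit formula for $\zeta_{G(\Fq)}(2g-2)$) by substituting $t=1$, exactly as the discussion after Definition~\ref{d:poly} licenses: once $\fX_\Fq$ is polynomial count, $\chi(\fX_\Fq)=|\!|\fX|\!|_1(1)$. So the entire corollary is, in principle, a finite computation for finitely many groups; the work is organising that computation and handling the case distinctions cleanly.

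For part (i), the key observation is that when $g=1$ the Frobenius formula \eqref{eq:Frobenius} collapses: $\big(|G(\Fq)|/\chi(1)\big)^{2g-2}=\big(|G(\Fq)|/\chi(1)\big)^0=1$, so $|\fX(\Fq)|=|\Irr(G(\Fq))| = k(G(\Fq))$, the number of conjugacy classes. Hence $\chi(\fX_\Fq)$ is the value at $t=1$ of the (known, for $q\equiv 1\bmod d(G^\vee)$, polynomial) class-number function $k(G(\Fq))$. For a simple adjoint group $G$ of exceptional type, $G$ has connected centre, Lusztig's Jordan decomposition writes $k(G(\Fq))$ as a sum over semisimple classes $[s]$ of the number of unipotent characters of $G^\vee_s(\Fq)$; evaluating the resulting polynomial at $q=1$ reduces to combinatorial data about the (finitely many) pseudo-Levi centralisers $G^\vee_s$ and their unipotent-character counts, which are tabulated. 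I would simply run this for $G_2, F_4, E_6, E_7, E_8$ and record $12, 56, 46, 237, 252$.

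For parts (ii) and (iii), with $g>1$, one keeps the full sum \eqref{eq:Frobenius}. Using Lusztig's Jordan decomposition together with the fact that $\chi(1)$ depends on $(s,\rho)$ only through the $G^\vee(\Fq)$-class of $s$ (via the ``genus'') and through $\rho$, one groups the sum by genus: for each genus class one gets the genus number (a PORC count in $q$, by Deriziotis) times a sum over unipotent characters $\rho$ of $\big(|G(\Fq)|/\chi_{s,\rho}(1)\big)^{2g-2}$, each term of which is an explicit polynomial in $q$. Specializing to $B_2$ and $G_2$ (for (ii)) there are only a handful of genera — the regular semisimple ones, those with centraliser a maximal-rank subgroup, and the central one — so $|\!|\fX|\!|_1(t)$ is a short explicit sum of powers of polynomials in $t$; evaluating at $t=1$ turns each $|G(\Fq)|/\chi(1)$ into an integer and each genus number into its value at $1$, producing the closed forms $2^{8g-7}$ and $72^{2g-2}+8^{2g-2}+2\times 9^{2g-2}$. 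For (iii), the $\PGL_n$-character stack decomposes over $\Hom(\Gamma_g,Z(\mathrm{SL}_n))\cong$ characters of $Z(\mathrm{SL}_n)^\vee=\mu_n$, i.e.\ over $\bbZ/n$, and the component "associated to $1$" is the one indexed by the trivial such character; there its point count is the $\mathrm{SL}_n$-style sum, and isolating the leading/constant behaviour in $q$ and setting $q=1$ yields $\varphi(n)\, n^{2g-3}$, the $\varphi(n)$ coming from the Galois/central twisting and $n^{2g-3}$ from the exponent bookkeeping in \eqref{eq:Frobenius}.

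The main obstacle is not conceptual but bookkeeping: one must have in hand, for the relevant groups, the complete list of (dual-group) semisimple-element centraliser types, the Deriziotis genus numbers as polynomials in $q$, and the Lusztig degrees of the unipotent characters of each centraliser, and then carry the substitution $q\mapsto 1$ through a sum of high powers $(\,\cdot\,)^{2g-2}$ without arithmetic slips; the $B_2$ and $G_2$ cases in (ii) are small enough to do by hand, the exceptional cases in (i) require care because $E_7, E_8$ have many semisimple-centraliser types. A secondary subtlety is making sure the Euler characteristic is genuinely the value at $1$ of $|\!|\fX|\!|_1$ — this is exactly where the hypothesis $q\equiv 1\bmod d(G^\vee)$ and the cited \cite[Theorem 2.8]{LetellierRodriguez} are used, so I would invoke those and note that, as in \cite[Corollary 1.11]{LS}, the congruence restriction is harmless for these numerical invariants.
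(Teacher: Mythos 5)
Your proposal is correct and follows essentially the same route as the paper: identify $\chi(\fX_\Fq)$ with $|\!|\fX|\!|_1(1)$ via polynomial count and \cite[Theorem 2.8]{LetellierRodriguez}, reduce (i) to class numbers of $G(\Fq)$ at $q=1$ (the paper verifies these from tables of conjugacy classes), obtain (ii) from the explicit genus tables for $\mathrm{SO}_5$ and $G_2$ in \S\ref{s:Examples} by isolating the summands not divisible by $q-1$ before setting $q=1$, and obtain (iii) from the Green-type formula for the component $\fX_1$ of the $\PGL_n$-stack. The one under-specified point is in (iii): the factor $\varphi(n)$ does not come loosely from ``Galois/central twisting'' but concretely from the fact that the unique type surviving division by $(q-1)^{2g-1}$ is the cuspidal one $m_{n,(1)}=1$, for which $A_\tau/(q-1)$ tends to $I_n'(1)=\varphi(n)/n$ and $H_\tau/(q-1)$ tends to $-n$ as $q\to 1$ --- a detail of execution rather than a gap in the approach.
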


Note that when $g=1$, $|\fX(\Fq)|$ equals the number of conjugacy classes of $G(\Fq)$. Thus, Part (i) can be verified by consulting tables of conjugacy classes of $G(\Fq)$. Part (ii) can also be verified by consulting character tables of $G(\Fq)$ for groups of small rank (cf. \cite{L}) but we found it instructive to prove this using our approach; see \S \ref{s:Examples}. 
Part (iii) should be compared with \cite{HRV}*{Corollary 1.1.1} which states that the Euler characteristic of a component of $\PGL_n$-character stack labelled by a \emph{primitive} root of unity is $\mu(n)n^{2g-3}$.

\subsection{Further directions} We expect the main theorem to hold for general reductive groups. The main difficulty with reductive groups with disconnected centre is that Lusztig's Jordan decomposition and genus numbers are more complicated because centralisers of semisimple elements in $G^\vee$ may be disconnected.

We also expect the theorem to hold for fundamental groups of non-orientable surfaces. For $G=\GL_n$, this is proved in \cite{LetellierRodriguez}. The main issue for general types is that the relationship between Frobenius--Schur indicators and the Lusztig--Jordan decomposition is not well-understood, cf. \cite{Vinroot} for some results in this direction.

Finally, we expect the theorem to hold for fundamental groups of punctured Riemann surfaces. In this case, a careful choice of conjugacy classes at the punctures (generalising the notion of generic from \cite{HLRV}) will ensure that the resulting character stack and character variety are the same. This is the subject of work in progress \cite{NAM}.

 \subsection{Structure of the text} 
In \S \ref{s:roots}, we review standard concepts regarding root datum and reductive groups over finite fields. In \S \ref{s:genus}, we recall some results of Carter and Deriziotis on centralisers of semsimple elements and genus numbers. In \S \ref{s:Lusztig}, we review Lusztig's Jordan decomposition of irreducible characters and classification of unipotent representations. Theorem \ref{t:main} and Corollary \ref{c:main} are proved in \S \ref{s:main}. In \S \ref{s:Examples}, we provide explicit formulas for the counting polynomials of character stacks associated to simple groups of semisimple rank $\leq 2$. In \S \ref{s:GLn}, we use Green's classification of irreducible characters of $\GL_n(\Fq)$ to count points on the character stack associated to $\GL_n$. Finally, in the appendix, we discuss the implications of our results for character stacks over $\bbC$.

\subsection{Acknowledgements} 
The second author would like to thank David Baraglia for introducing him to the world of character varieties and answering numerous questions and for Jack Hall for answering questions about stacks. We thank George Lusztig whose crucial comment set us on the right path. We also thank the participants of the Australian National University workshop ``Character varieties, $E$-polynomials and Representation  $\zeta$-functions", where our results were presented.

NB is supported by an Australian Government RTP Scholarship. MK is supported by Australian Research Council Discovery Projects.  The contents of this paper form a part of NB's Master's Thesis.

\section{Reductive groups over finite fields} \label{s:roots} 
In this section, we recall some basic notation and facts about structure of reductive groups over finite fields, cf. \cites{CarterBook, DigneMichel, GeckMalle}. But first, we define the notion of the modulus of a root datum used in our main theorem.

 \subsection{Modulus} \label{ss:modulus} 
Let  $\Psi=(X, X^\vee, \Phi, \Phi^\vee)$ be a root datum. Here, $X$ denotes the characters, $X^\vee$ cocharacters, $\Phi$ roots, and $\Phi^\vee$ coroots. 

  \begin{defe}  \label{d:modulus} 
 We define the \emph{modulus of $\Psi$}, denoted by $d(\Psi)$, to be the lcm of the sizes of torsion parts of the abelian groups $X/\langle \Phi_1\rangle $, where $\Phi_1$ ranges over closed subsystems of $\Phi$ and $\langle \Phi_1 \rangle$ denotes the subgroup of $X$ generated by $\Phi_1$. 
 \end{defe}

\subsubsection{} Let $G$ be a connected split reductive group with root datum $\Psi$. Then we define the modulus of $G$ by  $d(G) \colonequals d(\Psi)$. Note that the root datum $(X, X^\vee, \Phi_1, \Phi_1^\vee)$ defines a connected reductive subgroup $G_1\subseteq G$ of maximal rank.  The size of the torsion part of $X/\langle \Phi_1\rangle$ equals the number of connected components of the centre $Z(G_1)$. Thus, 
\begin{equation} 
d(G)=\mathrm{lcm} |\pi_0(Z(G_1))|, 
\end{equation}
 where $G_1$ ranges over connected reductive subgroups of $G$ of maximal rank. In particular, we see that $d(\GL_n)=1$. 
 
 \subsubsection{} Let $G$ be a simple simply connected group. Then one can show that $d(G)$ equals the lcm of coefficients of the highest root and the order of $Z(G)$, cf. \cite{Deriziotis85}. Thus, we have:
  \[
  \begin{array}{|c|c|c|c|c|c|c|c|c|c|}\hline
\mathrm{Type} & A_n  & B_n  & C_n  & D_n  & E_6 & E_7 & E_8 & F_4 & G_2\\\hline 
d(G)  & n+1 & 2 & 2  & 4 & 6  &  12 & 60 & 12  & 6 \\\hline 
 \end{array}
\]
Note that for types $B_n$, $C_n$, $E_6$, $G_2$ (resp. $D_n$, $E_7$, $E_8$),  $d(G)$ is the product of bad primes (resp. twice the product of bad primes) of $G$. We refer the reader to \cite{SpringerSteinberg} for the definition of bad primes.

 \subsection{Reductive groups over finite fields}\label{ss:relative} 
 Let $p$ be a prime, $k$ an algebraic closure of $\mathbb{F}_p$, and $\Fq$ the subfield of $k$ with $q$ elements. 
We use bold letters such as $\bX$ for schemes, stacks, etc. over $\Fq$ and script letters such as $\tX$ for their base change to $k$.  The (geometric) Frobenius $F=F_\tX \colon  \tX \ra \tX$ is the map $F_0\otimes \mathrm{id}$, where $F_0$ is the endomorphism of $\bX$ defined by raising the functions on $\bX$ to the $q^\mathrm{th}$ power.

\subsubsection{} 
Let $\bG$ 
 be a connected reductive group over $\Fq$ with a maximal quasi-split torus $\bT$.  Let $\Psi=(X, X^\vee, \Phi, \Phi^\vee)$ denote the root datum of $(\tG, \tT)$. We now explain how to encode the rational structure of $\bG$  via the root datum. 
The Frobenius  $F \colon \tT \ra \tT$ induces a homomorphism on characters
\[
X \to X, \qquad  \lambda \mapsto \lambda \circ F, 
\] 
which we denote by the same letter. We then have an automorphism $\varphi\in \mathrm{Aut}(X)$ of finite order such that 
\[
F(\lambda)(t) = \lambda(F(t))= q\varphi (\lambda)(t), \qquad \textrm{for all} \,\, t\in \tT. 
\]
In other words, $F$ acts on $X$ as the automorphism $q\varphi$. The rational structure of $\bG$ is encoded in the automorphism $F=q\varphi$.
In particular, $\bG$ is split if and only if $\varphi$ is trivial.

\subsection{Complete root datum} 
Let $W$ be the Weyl group of $\Phi$. For each $\alpha \in \Phi$, let $\alpha^\vee\in \Phi^\vee$ be the corresponding coroot. Define $s_\alpha \colon X\ra X$ by 
 \[
s_\alpha(x) \colonequals x-\langle x,\alpha^\vee\rangle \alpha, \qquad \forall x\in X. 
 \]
The map $\alpha \mapsto s_\alpha$ defines an embedding  $W\hookrightarrow \Aut(X)\subseteq \GL(X_{\mathbb{R}})$.  Consider the coset 
\[
\varphi W = \{\varphi\circ w \, | \, w\in W\} \subseteq \GL(X_{\mathbb{R}}).
\]
\begin{defe} 
Following \cite{GeckMalle}, 
we call $\widehat{\Psi} \colonequals (X,X^\vee, \Phi, \Phi^\vee, \varphi W)$ the \emph{complete root datum} of $\bG$. Similarly, we call $\widehat{\Phi} \colonequals (\Phi, \varphi W)$ the complete root system of $\bG$. 
\end{defe}  
The advantage of the complete root datum and root system is that $q$ does not appear in their definition. Given a complete root datum $\widehat{\Psi}$, for every prime power $q$, we have a unique, up to isomorphism, connected reductive group $\bG$ over $\Fq$ whose complete root datum is $\widehat{\Psi}$. We call $\bG$ the \emph{realisation} of $\widehat{\Psi}$ over $\Fq$.

\subsubsection{Dual group} 
Let $\bG^\vee$ be the group over $\Fq$ dual to $\bG$. By definition, this is the connected reductive group over $\Fq$ whose complete root datum  is given by $(X^\vee, X, \Phi^\vee, \Phi, \varphi^\vee W)$, where $ \varphi^\vee$ is the transpose of $\varphi$.

\subsubsection{Frobenius action on $W$} The action of the Frobenius on $\tG$ stabilises $\tT$ and $N_{\tG}(\tT)$. Thus, $F$ acts on $W=N_{\tG}(\tT)/\tT$. We denote the resulting automorphism of $W$ by $\sigma$.
We call $\widehat{W} \colonequals (W, \sigma)$ the \emph{complete Weyl group}. Elements $w_1$ and $w_2$ in $W$ are said to be \emph{$\sigma$-conjugate} if there exists $w\in W$ such that $w  w_1 \sigma(w)^{-1}=w_2$. If $\bG$ is split, $\sigma$-conjugacy is just the usual conjugacy.

\subsection{Finite reductive groups} 
Let $\bG$ be a connected reductive group over $\Fq$. The finite group  $\bG(\Fq)=\tG^F$ is called a  \emph{finite reductive group}.  Note that this  definition excludes Suzuki and Ree groups.

\subsubsection{Order polynomial} 
Let
\begin{equation} \label{eq:orderPolynomial} 
|\!|\bG|\!|(t) \colonequals t^{|\Phi^+|}  \det (t { \cdot} \mathrm{id}_X - \varphi^{-1})  \sum_{w\in W^\sigma} t^{l(w)}\in \bbZ[t]. 
\end{equation} 
Then  $|\bG(\Fq)|=|\!|\bG|\!|(q)$ \cite{GeckMalle}*{Remark 1.6.15}. Observe that this equality may not hold if we replace $q$ by $q^n$. In other words, $\bG$ may be not polynomial count.  It is, however, polynomial count if we assume that $\bG$ is split, in which case, the counting polynomial simplifies to  
  \begin{equation} \label{eq:orderPoly}
|\!|\bG|\!|(t)=t^{|\Phi^+|}(t-1)^{\rank(X)} \sum_{w\in W} t^{l(w)}.
\end{equation}  

\section{Genus numbers are PORC}\label{s:genus}
The aim of this section is to state a theorem of Deriziotis \cite{Deriziotis85} which tells us that genus numbers for finite reductive groups are PORC. We start by recalling the definition of the genus of a semisimple element due to Carter \cite{Carter1978}.

\subsection{Genus map} 
 Let $\bG$ be a connected reductive group over $\Fq$ and $\bG(\Fq)^{\mathrm{ss}}$ the set of semisimple elements of $\bG(\Fq)$. For each $x\in \bG(\Fq)^{\mathrm{ss}}$,  let $\bG_x$ denote its centraliser in $\bG$. 
 It is well-known that $\bG_x$ is a (possibly disconnected) maximal rank reductive subgroup of $\bG$. 
Thus, the root system of $\bG_x^\circ$ is a closed subsystem $\Phi_1\subseteq \Phi$. We now explain how to encode the rational structure of $\bG_x^\circ$ in root theoretic terms. 

\subsubsection{} Let $W_1\subseteq W$ be the Weyl group of $\Phi_1$ and $N_W(W_1)$ the normaliser of $W_1$ in $W$. Let $(W,\sigma)$ be the complete Weyl group of $\bG$. Note that the action of $\sigma$ on $W$ stabilises $W_1$. Thus, $\sigma$ acts on $N_W(W_1)/W_1$ and we have the notion of $\sigma$-conjugacy for this group. By a theorem of Carter \cite{Carter1978}*{\S 2}, the rational structure of $\bG_1$ is encoded in a $\sigma$-conjugacy class of $N_W(W_1)/W_1$. 

\begin{defe} Let $\Xi(\widehat{\Phi})$ denote  the set of pairs $\xi=([\Phi_1], [w])$ consisting of a $W$-orbit of a closed subsystem $\Phi_1\subseteq \Phi$ and a $\sigma$-conjugacy class $[w]\subseteq N_W(W_1)/W_1$. We refer to $\xi$ as a \emph{genus}  and call $\Xi(\widehat{\Phi})$ the \emph{set of genera} of $\widehat{\Phi}$. If $\bG$ is split, then the complete root datum is just the same as the root datum so we denote this set by $\Xi(\Phi)$. 
\end{defe}

\subsubsection{} Let $\bG^{\mathrm{[ss]}}(\Fq)=\bG^{\mathrm{ss}}(\Fq)/\bG(\Fq)$ denote the set of semisimple conjugacy classes of $\bG(\Fq)$. The above discussion implies that we have a canonical map, called the \emph{genus map}, 
\begin{align*}
\alpha_{\bG(\Fq)} \colon \bG^{\mathrm{[ss]}}(\Fq) &\longrightarrow \Xi(\widehat{\Phi}) \\
 x &\longmapsto [\bG_x^\circ],
\end{align*}
which sends a semisimple conjugacy class to its genus. The number of points of fibres of this map are known as \emph{genus numbers}.

\subsection{Genus numbers} Let $\widehat{\Psi}=(X, X^\vee, \Phi, \Phi^\vee, \varphi W)$ be a complete root datum. 
For each genus $\xi \in \Xi(\widehat{\Phi})$, we define a map $G_\xi^{\mathrm{[ss]}}$ from finite fields to sets as follows:
 Given a finite field $\Fq$, let $\bG$ be the realisation of $\widehat{\Psi}$ over $\Fq$ and set 
 \[
G_\xi^{\mathrm{[ss]}}(\Fq) \colonequals \{x\in \bG^{\mathrm{[ss]}}(\Fq) \, | \, \alpha_{\bG(\Fq)}(x)=\xi\}. 
 \]
Let $d(\Psi)$ denote the modulus as in Definition \ref{d:modulus}.

\begin{thm}[\cite{Deriziotis85}] If $X^\vee/\langle \Phi^\vee\rangle$ is free, then $G_\xi^{\mathrm{[ss]}}$ is PORC count with modulus $d(\Psi)$. \label{t:Deriziotis} 
\end{thm}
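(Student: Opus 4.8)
The plan is to reduce the count of semisimple conjugacy classes with a fixed genus to a count of Frobenius-fixed points (in the sense of $\sigma$-conjugacy) on a suitable subquotient, and then to show that the relevant data stabilise on residue classes of $q$ modulo $d(\Psi)$. First I would fix a complete root datum $\widehat{\Psi}$ and a genus $\xi = ([\Phi_1],[w])$, and recall the standard description of semisimple conjugacy classes of $\bG(\Fq)$ via the dual group: since $X^\vee/\langle\Phi^\vee\rangle$ is free, the centre $Z(\bG^\vee)$ is connected, so centralisers of semisimple elements in $\bG^\vee$ are connected, and semisimple classes of $\bG^\vee(\Fq)$ are in bijection with $F$-stable $W$-orbits on $\tT^\vee$, equivalently with $\Fq$-points of $\tT^\vee/W$. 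Under this bijection the genus records the closed subsystem $\Phi_1 = \{\alpha \in \Phi \mid \alpha(s) = 1\}$ together with the twisted conjugacy class in $N_W(W_1)/W_1$ that encodes the rational structure of the centraliser, exactly as in Carter's theorem cited in \S\ref{s:genus}.

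Next I would stratify: the number $|G_\xi^{\mathrm{[ss]}}(\Fq)|$ is a sum, over the $\sigma$-twisted conjugacy class of $w$ inside $N_W(W_1)/W_1$, of the number of points of the torus $(\tT^\vee)^{\Phi_1}$ — the identity component of the kernel of $\Phi_1$ on $\tT^\vee$ — that are \emph{regular} with respect to $\Phi_1$ (i.e. not killed by any larger closed subsystem), counted up to the action of the appropriate stabiliser and twisted by the Frobenius corresponding to $w$. Concretely, each such torus $\bS_w$ over $\Fq$ has order polynomial $|\!|\bS_w|\!|(q) = \det(q\cdot\mathrm{id} - w\varphi^\vee|_{V})$ on the relevant quotient space $V$, and the regular locus is obtained by inclusion–exclusion over closed subsystems $\Phi_1 \subsetneq \Phi_2 \subseteq \Phi$. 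The point-count of each piece is therefore a \emph{fixed} polynomial in $q$; the only source of non-polynomiality is the passage to orbits, where one divides by the order of a finite group (a subgroup of $N_W(W_1)/W_1$) that is constant in $q$, together with possible contributions of torsion in subquotients of $X^\vee$ which feed into the component groups appearing in the stabilisers.

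The heart of the matter — and the step I expect to be the main obstacle — is controlling these torsion contributions and showing that the resulting correction terms depend on $q$ only through $q \bmod d(\Psi)$. The key point is that, for a closed subsystem $\Phi_2 \supseteq \Phi_1$, the relevant finite abelian group is (a subquotient of) the torsion part of $X^\vee/\langle\Phi_2^\vee\rangle$, and the number of its $F$-fixed points — where $F$ acts as $q\varphi^\vee$ composed with the appropriate Weyl element — is governed by $q$ modulo the exponent of that torsion group. By Definition \ref{d:modulus}, $d(\Psi)$ is precisely the lcm of the orders of all such torsion parts $X/\langle\Phi_1\rangle$ (dually, $X^\vee/\langle\Phi_1^\vee\rangle$ after the obvious identification), so once $q$ is fixed modulo $d(\Psi)$ every such fixed-point count becomes a fixed integer, and assembling the inclusion–exclusion gives a polynomial in $q$ on each residue class. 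I would finish by checking that the combinatorial bookkeeping of which $W$-orbit of $\Phi_1$ and which $\sigma$-conjugacy class of $w$ arise is itself independent of $q$ (it is, since it only involves $\widehat{\Phi}$), so that the total is $|\!|G_\xi^{\mathrm{[ss]}}|\!|_i(q)$ for $q \equiv i \bmod d(\Psi)$, as claimed.
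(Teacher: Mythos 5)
The paper does not prove this statement at all: Theorem \ref{t:Deriziotis} is imported verbatim from Deriziotis's paper, so there is no internal proof to compare against. Your sketch follows what is essentially the original argument (reduce to $F$-stable $W$-orbits on the torus, stratify by the closed subsystem $\Phi_1=\{\alpha \mid \alpha(s)=1\}$, count fixed points on the diagonalizable group $\bigcap_{\alpha\in\Phi_1}\ker\alpha$ by M\"obius inversion over closed subsystems, and observe that only the torsion of the character lattice quotient introduces dependence on the residue class of $q$), and that skeleton is sound.

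However, your reduction step contains two concrete errors. First, the implication ``$X^\vee/\langle\Phi^\vee\rangle$ free $\Rightarrow Z(\bG^\vee)$ connected $\Rightarrow$ centralisers of semisimple elements in $\bG^\vee$ are connected'' is false at the second arrow: connectedness of centralisers is Steinberg's theorem and requires the \emph{derived subgroup of the ambient group} to be simply connected, not its centre to be connected ($\PGL_2$ has trivial, hence connected, centre but disconnected semisimple centralisers). The hypothesis $X^\vee/\langle\Phi^\vee\rangle$ free says precisely that $\bG$ itself has simply connected derived subgroup, so the entire count must be run in $\bG$, which is also where the genus map of \S\ref{s:genus} lives; the dual group plays no role in this theorem. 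Second, the parenthetical identification of $X/\langle\Phi_1\rangle$ with $X^\vee/\langle\Phi_1^\vee\rangle$ is false in general (for $\mathrm{SL}_n$ with $\Phi_1=\Phi$ the first is $\bbZ/n$ and the second is trivial), and it is not harmless: the modulus $d(\Psi)$ of Definition \ref{d:modulus} is the lcm of the torsion of the groups $X/\langle\Phi_1\rangle$, which are exactly the character groups of the diagonalizable groups $\bigcap_{\alpha\in\Phi_1}\ker\alpha\subseteq\tT$ appearing in the count for $\bG$; running the argument in $\bG^\vee$ would produce the other family of quotients and hence the wrong modulus. With both points corrected the rest of your outline goes through as intended.
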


\subsubsection{}  The freeness assumption implies that every realisation $\bG$ of $\widehat{\Psi}$ has simply connected derived subgroup. A theorem of Steinberg then implies that centralisers of semisimple elements of $\bG$ are connected. 
As shown in \cite{Deriziotis85}, we have 
\begin{equation}\label{eq:degree} 
 \deg |\!|G_\xi^{\mathrm{[ss]}}|\!|_i= \rank \, G - \rank \langle \Phi_1 \rangle. 
\end{equation} 

\subsubsection{Example}  Let $G=\GL_n$ and $\xi=(\emptyset, [1])$. The reductive subgroup of maximal rank associated to $\xi$ is the diagonal torus. 
 Thus, $G_\xi^{\mathrm{[ss]}}(\Fq)$ is the set of regular diagonal elements in $G(\Fq)$, up to permutation. Hence, 
\[
\displaystyle |\!|G_\xi^{\mathrm{[ss]}}|\!|(t)={t-1 \choose n}=\frac{(t-1)(t-2)\cdots (t-n-1)}{n!}. 
\]
 Note that 
$|\!|G_\xi^{\mathrm{[ss]}}|\!|(q)=0$ for all $q\leq n+1$. This is just the re-statement of the fact that if $q\leq n+1$, there are no regular diagonal elements. 

\subsubsection{} 
The polynomials $|\!|G^{\mathrm{[ss]}}_\xi|\!|_i$ have been determined explicitly for all genera $\xi$ of groups of exceptional type or type $A$, and for many genera of groups of type $B$, $C$, or $D$. However, as far as we understand, this problem has not been fully solved; see \cite{Fleischmann97} for further details.

\section{Representations of finite reductive groups} \label{s:Lusztig} Let $\bG$ be a connected reductive group over $\Fq$.  
In this section, we recall deep results of Lusztig on the structure of complex representations of $\bG(\Fq)$.

\subsection{Unipotent representations} Let   $\Irr_u(\bG(\Fq))$ denote  the set of (irreducible) complex unipotent characters.  
\begin{thm}[\cites{Lusztig84, Lusztig93}] There exists a finite set $\mathfrak{U}(\widehat{W})$, depending only on $\widehat{W}$, together with a function 
\begin{align*}
\mathrm{Deg} \colon \mathfrak{U}(\widehat{W}) &\longrightarrow \bbZ[|W|^{-1}][t] \\
 \rho &\longmapsto \mathrm{Deg}(\rho), 
\end{align*}
 such that the following holds: 
 \begin{itemize} 
 \item[] We have a bijection 
 \[
  \mathfrak{U}(\widehat{W})\longleftrightarrow \Irr_u(\bG(\Fq))
  \] 
  such that
 $\mathrm{Deg}(\rho)(q)$ is the degree of the unipotent character of $\bG(\Fq)$ associated to $\rho$.  \end{itemize} 
 \label{t:unipotent}
\end{thm}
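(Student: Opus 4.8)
The statement to prove is Theorem~\ref{t:unipotent}: the existence of a finite set $\mathfrak{U}(\widehat{W})$ with a degree function $\mathrm{Deg}$, depending only on the complete Weyl group $\widehat{W}=(W,\sigma)$, together with a bijection to $\Irr_u(\bG(\Fq))$ matching $\mathrm{Deg}(\rho)(q)$ with the actual character degree. Since the paper explicitly cites \cite{Lusztig84, Lusztig93}, the ``proof'' here is really a recollection/assembly of Lusztig's classification, so my plan is to organize it as a derivation from the known structure theory rather than a from-scratch argument.

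\medskip

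\noindent\textbf{Plan of proof.} The first step is to reduce to the case of a single geometric conjugacy class and split groups as far as possible. Recall that unipotent characters are by definition the irreducible constituents of the virtual characters $R_{\bT}^{\bG}(1)$ obtained via Deligne--Lusztig induction from the trivial character of rational maximal tori $\bT_w$ indexed by $\sigma$-conjugacy classes $[w]$ in $W$. The key input (Lusztig) is that the partition of $\Irr_u(\bG(\Fq))$ into families, the parameterisation of each family by the Lusztig nonabelian Fourier transform data attached to a finite group $\mathcal{G}_{\mathcal F}$ associated to a two-sided cell of $W$, and the formula for the degrees, all depend only on the Coxeter data $(W,\sigma)$ and not on $q$ or on the isogeny type of $\bG$. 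Concretely: for each two-sided cell one has a finite set $\mathcal M(\mathcal G_{\mathcal F})$ of pairs (element up to conjugacy, irreducible character of its centraliser), and $\mathfrak{U}(\widehat W)$ is the disjoint union of these over all $\sigma$-stable families, with the $\sigma$-twist affecting only which families/cells are retained and how the Fourier matrix is modified (the twisted case for $\sigma \ne 1$ uses the results on $\mathrm{Spetses}$-type combinatorics in \cite{Lusztig93}).

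\medskip

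\noindent\textbf{Key steps in order.} (1) Define $\mathfrak U(\widehat W)$ purely combinatorially from $(W,\sigma)$: enumerate $\sigma$-stable two-sided cells, attach to each the finite group $\mathcal G_{\mathcal F}$ and the set $\mathcal M(\mathcal G_{\mathcal F})$ (with the appropriate $\sigma$-action when $\sigma$ is nontrivial), and set $\mathfrak U(\widehat W) = \bigsqcup_{\mathcal F} \mathcal M(\mathcal G_{\mathcal F})^{\sigma}$. (2) Define $\mathrm{Deg}\colon \mathfrak U(\widehat W) \to \bbZ[|W|^{-1}][t]$ by Lusztig's explicit formula: for $\rho$ in the family $\mathcal F$, $\mathrm{Deg}(\rho)$ is $\pm$ the generic degree obtained by applying the Fourier transform matrix of $\mathcal F$ to the vector of fake degrees (which are polynomials in $t$ with rational coefficients whose denominators divide $|W|$), so membership in $\bbZ[|W|^{-1}][t]$ is immediate from the shape of the Fourier matrix entries and the fake-degree polynomials. (3) Invoke Lusztig's main classification theorem \cite[Main Theorem 4.23]{Lusztig84} (and its extension in \cite{Lusztig93} for the disconnected-center / twisted cases, which nonetheless does not affect $\Irr_u$ since unipotent characters are insensitive to the center) to produce the bijection $\mathfrak U(\widehat W) \leftrightarrow \Irr_u(\bG(\Fq))$. (4) Verify that under this bijection the degree of the unipotent character is $\mathrm{Deg}(\rho)(q)$ — this is part of Lusztig's theorem, since the generic degrees were computed exactly as polynomials in $q$. (5) Finally note finiteness of $\mathfrak U(\widehat W)$: it follows because $W$ has finitely many two-sided cells and each $\mathcal M(\mathcal G_{\mathcal F})$ is finite.

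\medskip

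\noindent\textbf{Main obstacle.} The genuine content is entirely in Lusztig's work, so the ``hard part'' for an expository proof is making the $q$-independence precise and uniform: one must point out that the combinatorial data (families, cells, the groups $\mathcal G_{\mathcal F}$, Fourier matrices, fake degrees) are defined from $(W,\sigma)$ alone, and that the only place $q$ enters is as the evaluation variable $t \mapsto q$ in the generic degree polynomials. A secondary subtlety is the twisted case $\sigma \ne 1$ (relevant here only through the appearance of $\widehat W$ rather than $W$): one must cite \cite{Lusztig93} for the classification of unipotent characters of non-split groups and confirm that the same polynomial-degree statement holds there, with the $\sigma$-action on $\mathcal M(\mathcal G_{\mathcal F})$ cutting down to the $\sigma$-fixed subset. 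Since the paper only needs the \emph{existence} of $\mathfrak U(\widehat W)$ and $\mathrm{Deg}$ with these properties, the cleanest route is simply to cite the relevant theorems of \cite{Lusztig84} and \cite{Lusztig93} and record the polynomiality of generic degrees, rather than reconstruct the Fourier-transform machinery in detail.
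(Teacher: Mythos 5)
The paper gives no proof of this theorem: it is imported wholesale from Lusztig's work, which is exactly what your step (3) does, so your proposal takes the same route as the paper. Your surrounding outline (families attached to two-sided cells, the finite sets $\mathcal{M}(\mathcal{G}_{\mathcal{F}})$ with their $\sigma$-action, degrees obtained by applying the nonabelian Fourier transform to the fake-degree vector, and \cite{Lusztig93} supplying the fact that all of this depends only on $(W,\sigma)$ and not on $q$ or the isogeny type) is an accurate account of where the actual content lives.
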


\subsubsection{Remarks} 
\begin{enumerate} 
\item[(i)] The pair $(\mathfrak{U}(\widehat{W}), \mathrm{Deg})$ has been determined explicitly by Lusztig in all types, cf. the appendix of  \cite{Lusztig84}. 
\item[(ii)] If $(W, \sigma)=(S_n, \mathrm{id})$, then $\mathfrak{U}(W)$ equals $\mathscr{P}_n$ the set of partitions of $n$. Moreover, if $\lambda=(\lambda_1, \dots,\lambda_m)$ is a partition of $n$ with $\lambda_1 \leq \lambda_2 \leq \cdots \leq \lambda_m$ 
and $\rho_\lambda(q)$ is the corresponding unipotent representation of $\bG(\Fq)$, then  
\[
\displaystyle \mathrm{Deg} \, \rho_\lambda (q) = \frac{(q-1) |\bG(\Fq)|_{p'} \prod\limits_{1 \leq i < j \leq m} (q^{\alpha_j} - q^{\alpha_i})}{q^{{m -1 \choose 2} + {m-2 \choose 2} + \cdots} \prod\limits_{i=1}^m \prod\limits_{k=1}^{\alpha_i} (q^k -1)},
\]
where $\alpha_i \colonequals \lambda_i + (i-1)$, and $|\bG(\Fq)|_{p'}$ denotes the prime to $p$ part of $|\bG(\Fq)|$. 
\item[(iii)] One knows that the polynomial $\mathrm{Deg}(\rho)$ divides the order polynomial $|\!|\bG|\!|$, cf. \cite{GeckMalle}*{Rem. 2.3.27}. Thus, $|\!|\bG|\!|/\mathrm{Deg}(\rho)$ is a polynomial in $\bbZ[t]$. 
\end{enumerate}

\subsection{Jordan decomposition of characters}\label{ss:LusztigJordan} 
Let $\bG^\vee$ denote the dual group of $\bG$ over $\Fq$. 

\begin{thm}[Lusztig's Jordan Decomposition \cite{Lusztig84}*{Thm. 4.23}] Suppose $\bG$ has connected centre. 
Then we have a bijection
\[
\Irr(\bG(\Fq)) \longleftrightarrow \bigsqcup_{[x] \in \bG^{\vee,{\mathrm{[ss]}}}(\Fq)}  \Irr_u(\bG^\vee_x(\Fq)).
\]
  Moreover, if $\chi\in \Irr(\bG(\Fq))$ is matched with $\rho\in \Irr_u(\bG^\vee_x(\Fq))$, then 
\[
\chi(1) = \rho(1) [\bG^\vee(\Fq):\bG^\vee_x(\Fq)]_{p'},
\]
where the subscript $p'$ (resp. $p$) denotes the prime to $p$ part (resp. the $p$ part). 
 
\label{t:Jordan} 
\end{thm}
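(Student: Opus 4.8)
This statement is Lusztig's Jordan decomposition, and in the body of the paper it is invoked as a black box; still, let me describe how one proves it and where the difficulty lies. The plan is to begin with Deligne--Lusztig theory for $\bG$: to each $F$-stable maximal torus $\bT\subseteq\bG$ and each $\theta\in\Irr(\bT^F)$ one attaches the virtual character $R_{\bT}^{\bG}(\theta)$, and one shows every $\chi\in\Irr(\bG(\Fq))$ occurs in some $R_{\bT}^{\bG}(\theta)$. Duality between $\bG$ and $\bG^\vee$ turns $\bG^F$-conjugacy classes of pairs $(\bT,\theta)$ into $\bG^{\vee F}$-conjugacy classes of pairs $(\bT^\vee,s)$ with $s\in\bG^\vee(\Fq)$ semisimple, and the class $[s]$ turns out to be an invariant of $\chi$. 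This yields the partition into rational Lusztig series
\[
\Irr(\bG(\Fq))=\bigsqcup_{[x]\in\bG^{\vee,{\mathrm{[ss]}}}(\Fq)}\mathcal{E}(\bG(\Fq),[x]),
\]
with $\Irr_u(\bG(\Fq))=\mathcal{E}(\bG(\Fq),[1])$ by definition; this much does not use connectedness of the centre.

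Next I would construct, for each $[x]$, the required bijection $\mathcal{E}(\bG(\Fq),[x])\xrightarrow{\sim}\mathcal{E}(\bG^\vee_x(\Fq),[1])=\Irr_u(\bG^\vee_x(\Fq))$. This is where the hypothesis on $Z(\bG)$ enters: because $\bG^\vee$ has simply connected derived subgroup, Steinberg's theorem makes $\bG^\vee_x$ connected reductive, so $\Irr_u(\bG^\vee_x(\Fq))$ is defined via Theorem~\ref{t:unipotent}, and the decomposition above is a genuine disjoint union with no multiplicities. The strategy is to reduce first, via a regular embedding $\bG\hookrightarrow\widetilde\bG$ with $Z(\widetilde\bG)$ connected, and then to the case where $x$ is isolated, and to pin down the bijection by demanding compatibility with Deligne--Lusztig induction: $R_{\bT^\vee}^{\bG}(\theta)$ on the left must correspond to $\varepsilon_{\bG}\varepsilon_{\bG^\vee_x}R_{\bT^\vee}^{\bG^\vee_x}(1)$ on the right up to sign, since both are governed by the geometric series attached to the same torus. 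Carrying this out rests on Lusztig's classification of unipotent characters and the combinatorics of families; a uniform, type-independent argument uses the theory of character sheaves and the Fourier transform on class functions.

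The degree formula then drops out: the basic degree identity $R_{\bT^\vee}^{\bG}(\theta)(1)=\pm[\bG(\Fq):\bT^{\vee F}]_{p'}$ (and likewise inside $\bG^\vee_x$), combined with $|\bG(\Fq)|=|\bG^\vee(\Fq)|$ and the fact that passing to $p'$-parts kills the unipotent-radical contribution to the index, yields $\chi(1)=\rho(1)\,[\bG^\vee(\Fq):\bG^\vee_x(\Fq)]_{p'}$ once the bijection is known to be compatible with Deligne--Lusztig characters.

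The hard part is unambiguously the construction of the bijection in the second paragraph: it is not canonical, it genuinely requires the full classification of unipotent characters together with Lusztig's families, and for a uniform proof the machinery of character sheaves; the reduction to the isolated case through a regular embedding also requires care. For the purposes of this paper none of this is reproved --- the cited statement of Lusztig is exactly what is needed --- and what is actually used downstream is only the bijection together with the degree formula, fed into the explicit description of $\Irr_u$ and $\mathrm{Deg}$ supplied by Theorem~\ref{t:unipotent}.
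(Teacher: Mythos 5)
The paper does not prove this theorem---it is quoted directly from Lusztig (\cite{Lusztig84}*{Thm.~4.23}), exactly as you treat it, and your sketch of the underlying Deligne--Lusztig/rational-series argument, of where connectedness of the centre enters (Steinberg's theorem making $\bG^\vee_x$ connected), and of how the degree formula follows from compatibility with $R_{\bT}^{\bG}(\theta)$ and $|\bG(\Fq)|=|\bG^\vee(\Fq)|$ is consistent with the standard account. The one small inaccuracy is that under the hypothesis that $Z(\bG)$ is connected no regular embedding is needed (that device is for extending the Jordan decomposition to disconnected centre), but since both you and the paper use the theorem as a black box, nothing downstream is affected.
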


\subsubsection{}  Let $r(x) \colonequals |\Phi^+|-|\Phi_x^+|$. Then we have:

\begin{lem} In the above theorem, the relationship between degrees of $\chi$ and $\rho$ can be reformulated as follows: 
\[
\frac{|\bG(\Fq)|}{\chi(1)}=q^{r(x)} \frac{|\bG_x^\vee(\Fq)|}{\rho(1)}.
\]
\end{lem}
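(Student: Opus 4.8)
The plan is to derive the reformulated degree identity directly from Lusztig's Jordan decomposition (Theorem~\ref{t:Jordan}) by substituting the known order polynomials for $\bG(\Fq)$ and $\bG^\vee_x(\Fq)$ and tracking the powers of $q$. Starting from
\[
\chi(1) = \rho(1)\, [\bG^\vee(\Fq):\bG^\vee_x(\Fq)]_{p'},
\]
I would divide $|\bG(\Fq)|$ by both sides. The first point to use is that $|\bG(\Fq)| = |\bG^\vee(\Fq)|$: this holds because dual groups have order polynomials obtained from each other by transposing $\varphi$ (and $W^\sigma = (W^\vee)^{\sigma^\vee}$ via $l(w)=l(w^{-1})$), so \eqref{eq:orderPolynomial} gives the same value. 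Hence
\[
\frac{|\bG(\Fq)|}{\chi(1)} = \frac{|\bG^\vee(\Fq)|}{\rho(1)\,[\bG^\vee(\Fq):\bG^\vee_x(\Fq)]_{p'}} = \frac{|\bG^\vee(\Fq)|}{[\bG^\vee(\Fq):\bG^\vee_x(\Fq)]_{p'}\,\rho(1)}.
\]

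The key step is then to analyse $|\bG^\vee(\Fq)| / [\bG^\vee(\Fq):\bG^\vee_x(\Fq)]_{p'}$. Write $|\bG^\vee(\Fq)| = q^{N} M$ where $N = |\Phi^+|$ is the exponent of the $q$-part (so $M = |\bG^\vee(\Fq)|_{p'}$ is prime to $p$), and similarly $|\bG^\vee_x(\Fq)| = q^{N_x} M_x$ with $N_x = |\Phi_x^+|$ and $M_x = |\bG^\vee_x(\Fq)|_{p'}$; here I use that $\bG^\vee_x$ is connected reductive (guaranteed by the connected-centre hypothesis on $\bG$, equivalently the freeness condition and Steinberg's theorem, as noted after Theorem~\ref{t:Deriziotis}), so its order polynomial has the standard shape with $q$-part $q^{|\Phi_x^+|}$. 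Then $[\bG^\vee(\Fq):\bG^\vee_x(\Fq)]_{p'} = M/M_x$, so
\[
\frac{|\bG^\vee(\Fq)|}{[\bG^\vee(\Fq):\bG^\vee_x(\Fq)]_{p'}} = \frac{q^N M}{M/M_x} = q^N M_x = q^{N - N_x} \cdot q^{N_x} M_x = q^{r(x)} |\bG^\vee_x(\Fq)|,
\]
using $r(x) = |\Phi^+| - |\Phi_x^+| = N - N_x$. Substituting back yields
\[
\frac{|\bG(\Fq)|}{\chi(1)} = q^{r(x)} \frac{|\bG^\vee_x(\Fq)|}{\rho(1)},
\]
which is the claim.

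The only genuine obstacle is justifying the two structural facts cleanly: first, that $|\bG(\Fq)| = |\bG^\vee(\Fq)|$, and second, that the $p$-part of $|\bG^\vee_x(\Fq)|$ is exactly $q^{|\Phi_x^+|}$ — i.e. that the $p'$-index computation $[\bG^\vee(\Fq):\bG^\vee_x(\Fq)]_{p'} = |\bG^\vee(\Fq)|_{p'}/|\bG^\vee_x(\Fq)|_{p'}$ is legitimate and that $\bG^\vee_x$ is connected so its order is a genuine order polynomial. The first follows from the order polynomial formula \eqref{eq:orderPolynomial} together with $\det(t\cdot\mathrm{id}_X - \varphi^{-1}) = \det(t\cdot\mathrm{id}_{X^\vee} - (\varphi^\vee)^{-1})$ and the length-preserving bijection $w \mapsto w^{-1}$ of $W^\sigma$; the second is the remark following Theorem~\ref{t:Deriziotis} (freeness of $X^\vee/\langle\Phi^\vee\rangle$ under the connected-centre hypothesis, plus Steinberg's connectedness theorem), after which everything is an elementary manipulation of $q$-powers. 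I expect the write-up to be short, with most of the care going into citing these two inputs correctly.
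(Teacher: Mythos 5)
Your proposal is correct and follows essentially the same route as the paper: both arguments reduce to the observation that the $p$-part of $[\bG^\vee(\Fq):\bG^\vee_x(\Fq)]$ is $q^{r(x)}$ (via the order polynomial \eqref{eq:orderPoly} and connectedness of $\bG^\vee_x$) together with $|\bG(\Fq)|=|\bG^\vee(\Fq)|$. Your write-up merely makes explicit the justifications (the transpose symmetry of the order polynomial and Steinberg's connectedness theorem) that the paper leaves implicit.
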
 

\begin{proof} Indeed, by \ref{eq:orderPoly},
$[\bG^\vee(\Fq) : \bG^\vee_x(\Fq)]_{p} = q^{r(x)}$. Thus, $\displaystyle \chi(1)q^{r(x)}=\rho(1) \frac{|\bG^\vee(\Fq)|}{|\bG_x^\vee(\Fq)|}$. 
The result now follows from the fact that $|\bG(\Fq)|=|\bG^\vee(\Fq)|$.  
\end{proof}

\section{Proofs of main results}\label{s:main}

\subsection{Counting points}\label{ss:counting}
 In this subsection, we prove Theorem \ref{t:main}. Recall that $G$ is a connected split reductive group over $\bbZ$. 
\begin{enumerate} 
\item 
As mentioned in \S \ref{sss:Frobenius}, Frobenius' theorem  implies 
\[
|\fX(\Fq)|=\sum_{\chi\in \Irr(G(\Fq))} \left(\frac{|G(\Fq)|}{\chi(1)}\right)^{2g-2}.  
\]

\item As $G$ has connected centre, Lusztig's Jordan decomposition (Theorem \ref{t:Jordan}) implies 
\[ 
|\fX(\Fq)|=\sum_{[x]\in G^\vee(\Fq)^{\mathrm{ss}}/G^\vee(\Fq)}\quad 
\sum_{\rho\in \Irr_u(G_x^\vee(\Fq))} q^{r(x)(2g-2)} \left(\frac{|G_x^\vee(\Fq)|}{\rho(1)}\right)^{2g-2}.  
\]

\item 
Lusztig's classification of unipotent representations (Theorem \ref{t:unipotent}) then gives 
\[
|\fX(\Fq)|=\sum_{[x]\in G^\vee(\Fq)^{\mathrm{ss}}/G(\Fq)}\, \, q^{r(x)(2g-2)}
\sum_{\rho\in \mathfrak{U}(\widehat{W_x})} \left(\frac{|\!|G_x^\vee|\!|}{\mathrm{Deg}_x(\rho)}\right)^{2g-2}(q).  
\] 
Here $\mathrm{Deg}_x$ denotes the degree function associated to the (possibly non-split) connected reductive group $G^\vee_x$; see Theorem \ref{t:unipotent}.

\item Using the notion of genus (\S \ref{s:genus}) for $G^\vee$, we can re-write the above sum as 
\[ 
|\fX(\Fq)|=\sum_{\xi \in \Xi({\Phi^\vee})} \quad 
\sum_{[x]\in G_\xi^\vee(\Fq)}\, \, q^{r(x)(2g-2)}
\sum_{\rho\in \mathfrak{U}(\widehat{W_x})} \left(\frac{|\!|G_x^\vee|\!|}{\mathrm{Deg}_x(\rho)}\right)^{2g-2}(q).  
\]
\item Observe that $r(x)$, $|\!|G_x^\vee|\!|$, $\mathfrak{U}(\widehat{W_x})$, and $\mathrm{Deg}_x(\rho)$ depend only on the genus $\xi$ of the semisimple class $[x]$. Thus, we may denote these by $r(\xi)$, $|\!|G_\xi^\vee|\!|$, $\mathfrak{U}(\widehat{W_\xi})$, and $\mathrm{Deg}_\xi(\rho)$. Hence, we can re-write the above sum as 
\[
|\fX(\Fq)|=\sum_{\xi \in \Xi({\Phi^\vee})} q^{r(\xi)(2g-2)}
\sum_{\rho\in \mathfrak{U}(\widehat{W_\xi})} \left(\frac{|\!|G_\xi^\vee|\!|}{\mathrm{Deg}_\xi(\rho)}\right)^{2g-2}(q)  
\quad 
\sum_{[x]\in G_\xi^\vee(\Fq)} 1. 
\]

\item Recall the definition $d=d(G^\vee)$ from Theorem \ref{t:Deriziotis}. Assume that $q\equiv i \mod d$, where $i\in \{0,1,\cdots, d(G^\vee)\}$. Then Theorem \ref{t:Deriziotis} gives 
\[
|\fX(\Fq)|=
\sum_{\xi \in \Xi({\Phi^\vee})} q^{r(\xi)(2g-2)}
\sum_{\rho\in \mathfrak{U}(\widehat{W_\xi})} \left(\frac{|\!|G_\xi^\vee|\!|}{\mathrm{Deg}_\xi(\rho)}\right)^{2g-2}(q) |\!|G_\xi^{\mathrm{\vee, [ss]}}|\!|_i(q).
\]
\end{enumerate}

\begin{defe}  \label{d:main} 
For $i\in \{0,1,\cdots, d(G^\vee)\}$, define polynomials 
\begin{equation} 
|\!|\fX|\!|_i = 
\sum_{\xi \in \Xi({\Phi^\vee})} 
t^{r(\xi)(2g-2)} |\!|G_\xi^{\mathrm{\vee, [ss]}}|\!|_i
\sum_{\rho\in \mathfrak{U}(\widehat{W_\xi})}
\left(\frac{|\!|G_\xi^\vee|\!|}{\mathrm{Deg}_\xi(\rho)}\right)^{2g-2}  \in \bQ[t]. 
\end{equation} 
\end{defe} 
Note that each summand is polynomial with rational coefficients. The sum is over objects which depend only on the complete root datum $\Psi$ i.e., they are independent of $q$. It follows that $|\!|\fX|\!|\in \bbQ[t]$. 
The above discussion then shows that $\fX$ is PORC count with counting polynomials $|\!|\fX|\!|_i$. Thus, Theorem \ref{t:main} is proved. \qed

\subsubsection{Aside on representation $\zeta$-function} \label{sss:zeta}
For each  $i\in \{0,1,\cdots, d(G^\vee)-1\}$ and $u\in \mathbb{C}$, let 
\[
\xi_i(u,t) \colonequals  
\sum_{\xi \in \Xi({\Phi^\vee})} 
t^{r(\xi)u} |\!|G_\xi^{\mathrm{\vee, [ss]}}|\!|_i
\sum_{\rho\in \mathfrak{U}(\widehat{W_\xi})}
\left(\frac{|\!|G_\xi^\vee|\!|}{\mathrm{Deg}_\xi(\rho)}\right)^{u}. 
\]
Then we have equality of complex functions 
\[
\zeta_{G(\Fq)}(s) = \xi_i(s, q),\qquad \forall q\equiv i \mod d(G^\vee). 
\]

\subsection{Counting polynomials in the case $g=1$} 
In this subsection, we show that if $g=1$, then  $|\!|\fX|\!|_i$ has degree $\rank(G)$ and leading coefficient $1$. This establishes Corollary \hyperref[c:main]{\ref*{c:main}(i)}. 

\subsubsection{} By the above discussion, 
 \[
|\!|\fX|\!|_i = 
\sum_{\xi \in \Xi({\Phi^\vee})}|\!|G_\xi^{\mathrm{\vee, [ss]}}|\!|_i |\mathfrak{U}(\widehat{W_\xi})|. 
\]
In view of equation \ref{eq:degree},  the degree of a summand is maximal if and only if $\Phi_1$ is empty; i.e., when $\xi$ is the genus of a \emph{regular} semisimple element. Thus, the degree of $|\fX(\Fq)|$ equals 
$\rank(G)$. 

\subsubsection{} 
Next, one can easily check that for genera $(\emptyset, [w])$, 
the leading coefficient of $|\!|G_\xi^{\mathrm{\vee, [ss]}}|\!|_i$ equals $1/{|W_w|}$, where $W_w$ denotes the centraliser of $w$ in $W$. 
Thus, the leading coefficient of $|\!|\fX|\!|_i$  is $\sum 1/|W_w|$, where the sum runs over conjugacy classes of $W$. By the orbit stabiliser theorem, this sum equals $1$. This establishes Corollary \ref{c:main}(i). 
\qed

 \subsubsection{} Here is an alternative (perhaps more intuitive) argument for this corollary. By \eqref{eq:Frobenius}, $|\fX(\Fq)|$ is the counting polynomial for the number of conjugacy classes of $G(\Fq)$.  Now the leading term of the class number equals the leading term of the polynomial counting semisimple elements. By a theorem of Steinberg, the latter equals $|Z(G(\Fq))|q^{\rank([G,G])}$. Thus, the leading coefficient is $1$ and the degree is $\dim(Z(G))+\rank([G,G])=\rank(G)$.

\subsection{Counting polynomials in the case $g>1$} 
In this subsection, we show that if $g>1$, then the polynomial $|\!|\fX|\!|_i$ has degree $(2g-2)\dim G+\dim Z(G^\vee)$ and leading coefficient $|\pi_0(Z(G^\vee))|$. This establishes Corollary \hyperref[c:main]{\ref*{c:main}(ii)}. 

\subsubsection{} 
We claim that only $\xi=([\Phi^\vee], 1)$ contributes to the leading term of $|\!|\fX|\!|_i$. Note that a semisimple element has genus  $([\Phi^\vee], 1)$ if and only if it is central. 
Thus, the claim implies that the leading term of $|\!|\fX|\!|_i(q)$ is the same as the leading term of $\displaystyle |Z(G^\vee(\Fq))| |G^\vee(\Fq)|^{2g-2}$, which would establish the desired result.\footnote{Aside: The claim amounts to the statement that only $1$-dimensional representations contribute to the leading term of \eqref{eq:Frobenius}. Thus, the leading term of $|\fX(\Fq)|$ equals the leading term of $|G(\Fq)^{\mathrm{ab}}|.|G(\Fq)|^{2g-2}$, where $G(\Fq)^{\mathrm{ab}}$ denotes the abelianisation of $G(\Fq)$.}

\subsubsection{} 
For ease of notation, set $n \colonequals 2g-2$.  Let $\xi=([\Phi_1], [w])\in \Xi({\Phi^\vee})$ be a genus. Thus, $\Phi_1$ denotes a closed subsystem of the dual root system $\Phi^\vee$.  Let  
\[
P_{\xi, n}(t) = t^{n.r(\xi)} |\!|G_\xi^{\mathrm{\vee, [ss]}}|\!|_i
\sum_{\rho\in \mathfrak{U}(\widehat{W_\xi})}
\left(\frac{|\!|G_\xi^\vee|\!|}{\mathrm{Deg}_\xi(\rho)}\right)^{n}.
\]
\subsubsection{} 
Observe that 
\[
\begin{split} 
\deg P_{\xi, n} & =  n.r(\xi) + \deg |\!|G_\xi^{\mathrm{\vee, [ss]}}|\!|_i  + n.\dim(G_\xi^\vee) \\
 & = n\Big(|\Phi^+|-|\Phi_1^+|\Big)+\Big(\rank(X)-\rank\langle \Phi_1\rangle \Big) + n.\dim(G_\xi^\vee)\\
 & = n\Big(|\Phi^+|-|\Phi_1^+|+\dim(G_\xi^\vee)\Big) + \Big(\rank(X)-\rank\langle \Phi_1\rangle\Big ) \\
 & = n\Big(|\Phi^+|-|\Phi_1^+|+2|\Phi_1^+|+\rank(X)\Big) +  \Big(\rank(X^\vee)-\rank\langle \Phi_1\rangle \Big)\\
 & = n\Big(|\Phi^+|+|\Phi_1^+|+\rank(X)\Big) +  \Big(\rank(X^\vee)-\rank\langle \Phi_1\rangle \Big). \\
 \end{split} 
\]
Thus, 
\[
\begin{split} 
n\dim(G)+\dim(Z(G^\vee)) -  \deg P_{\xi, n}  & =  n\Big (2|\Phi^+|+\rank(X)\Big) + \Big(\rank(X^\vee)-\rank \langle \Phi^\vee \rangle\Big) - \deg P_{\xi,n} \\
&= n\Big(|\Phi^+|-|\Phi_1^+|\Big) + \rank \langle  \Phi_1 \rangle -\rank \langle  \Phi^\vee \rangle. \\ 
\end{split}
\]

\subsubsection{} 
It is clear that the above quantity is $0$ if $\Phi_1=\Phi^\vee$  i.e. if $\xi$ is central.  If $\xi$ is not central; i.e. $\Phi_1$ is strictly smaller than $\Phi^\vee$, then the above quantity is positive because
\[
n\Big(|\Phi^+|-|\Phi_1^+|\Big) > \rank \langle  \Phi^\vee \rangle -\rank \langle  \Phi_1 \rangle. 
\]
This follows from the fact that elements of $(\Phi^\vee)^+-\Phi_1^+$ span the vector space $(\langle  \Phi^\vee \rangle/\langle  \Phi_1 \rangle)\otimes \bbR$ and that $n\geq 2$. This concludes the proof.
\qed

\section{Examples: $\PGL_2$, $\PGL_3$, $\mathrm{SO}_5$, and $G_2$}\label{s:Examples} 
In this section, we give tables containing the genera $\xi\in \Xi({\Phi}^\vee)$, the integer $r(\xi)$, the size of centraliser $|G_\xi^\vee(\Fq)|$, genus number $|G_\xi^{\mathrm{\vee, [ss]}}(\Fq)|$, and the unipotent degrees of the centraliser, for simple adjoint groups $G$ of rank $\leq 2$. We assume throughout that $q\equiv 1 \mod d(G^\vee)$. By the discussion of \S \ref{ss:counting}, the counting polynomial $|\!|\fX|\!|_1$ of the associated character stacks can be determined using these tables. 

\subsection{The case $G=\PGL_2$} In this case, $G^\vee=\mathrm{SL}_2$. In this case $d(G^\vee)=2$. 
So let us assume $q$ is odd. Then the genera for $\PGL_2$ is given in Table \ref{tab:PGL2}. 

\begin{table}[h]
\begin{tabular}{|c|c|c|c|c|} 
\hline
$\Xi({\Phi^\vee})$  & $r(\xi)$ & $|G_\xi^\vee(\Fq)|$ & $|G_\xi^{\mathrm{\vee, [ss]}}(\Fq)|$ & Unipotent degrees \\ \hline
$(A_1, 1)$ & 0 & $q(q^2-1)$ & 2 & $1,q$ \\ \hline
$(\emptyset, 1)$ & 1 & $q-1$ & $\frac{q-3}{2}$ & 1 \\ \hline 
$(\emptyset, w)$ & 1  & $q+1$ & $\frac{q-1}{2}$ & 1\\ \hline
\end{tabular} \vspace{0.5em}
\caption {Genera for $\PGL_2$} \label{tab:PGL2} 
\end{table}

\subsubsection{} Using the table, we find 
\begin{align*}
  |\fX(\Fq)| = 2\big((q(q^2-1))^{2g-2} + (q^2-1)^{2g-2}\big) + \frac{q-3}{2} q^{2g-2} (q-1)^{2g-2} + \frac{q-1}{2} q^{2g-2}(q + 1)^{2g-2}.
\end{align*}

\subsubsection{} 
For instance, for $g=2, 3, 4$ we obtain, respectively, the polynomials 
\[ 
2 q^6+q^5-4 q^4+3 q^3-4 q^2+2, 
\] 
\[ 
2 q^{12}-8 q^{10}+q^9+12 q^8+10 q^7-28 q^6+5 q^5+12 q^4-8 q^2+2,
 \] 
\[
2 q^{18}-12 q^{16}+30 q^{14}+q^{13}-40 q^{12}+21 q^{11}-12 q^{10}+35 q^9-12 q^8+7 q^7-40 q^6+30 q^4-12 q^2+2.
 \] 

\subsection{The case $G=\PGL_3$} In this case, $G^\vee=\mathrm{SL}_3$. Then $d(G^\vee)=3$. So let us assume $q \equiv 1 \mod 3$. 
Then the genera is given in Table \ref{tab:PGL3}. 
\begin{table}[h]
\begin{tabular}{|l|l|l|l|l|} 
\hline
$\Xi({\Phi^\vee})$  & $r(\xi)$ & $|G_\xi^\vee(\Fq)|$ & $|G_\xi^{\mathrm{\vee, [ss]}}(\Fq)|$ & Unipotent degrees \\ \hline
$(A_2, 1)$ & 0 & $q^3(q^3-1)(q^2-1)$ & 3  & $1$, $q(q+1)$, $q^3$ \\ \hline
$(A_1, 1)$ & 2 & $q(q^2-1)$ & $q-4$ & $1,q$\\ \hline 
$(\emptyset, 1)$ & 6  &   $(q-1)^2$ & $\frac16(q^2-5q+10)$ & 1  \\ \hline 
$(\emptyset,w_1)$ & 6  & $q^2-1$ & $\frac12 q(q-1)$ & 1 \\ \hline
$(\emptyset, w_2)$ & 6  & $q^2 + q + 1$ & $\frac13 (q^2+q-2)$  & 1\\ \hline
\end{tabular} \vspace{0.5em}
\caption {Genera for $\PGL_3$. Here $w_1$ and $w_2$ are simple generators for $W$. } \label{tab:PGL3} 
\end{table}

\subsubsection{} Using the table,  we find: 
\begin{align*}
|\fX(\Fq)| &= 3 \Big(\big(q^3(q^3-1)(q^2-1)\big)^{2g-2} + \big(q^2(q^3-1)(q^2-1)\big)^{2g-2} + \big(q^2(q^3-1)(q-1)\big)^{2g-2}\Big) \\
&+ (q-4)q^{4g-4}\Big((q(q^2-1))^{2g-2} + (q^2-1)^{2g-2}\Big) \\
&+ q^{12g-12} \bigg( \frac{q^2-5q+10}{6}( (q-1)^{2})^{2g-2} + \frac{q(q-1)}{2}(q^2-1)^{2g-2} \\
&+ \frac{q^2+q-2}{3}(q^2+q+1)^{2g-2}   \bigg).
\end{align*}

\subsubsection{}
For instance, for $g=2, 3$ we obtain, respectively, the polynomials 
\[ 
3 q^{16}-6 q^{14}-5 q^{13}+q^{12}+13 q^{11}+17 q^{10}-33 q^9+23 q^8-29 q^7+8 q^6+15 q^5+2 q^4-6 q^3-6 q^2+3
 \]
 and
\begin{align*}
&3 q^{32}-12 q^{30}-12 q^{29}+18 q^{28}+48 q^{27}+6 q^{26}-71 q^{25}-74 q^{24}+42 q^{23}+131 q^{22} \\
&-53 q^{21}+52 q^{20}-104 q^{19}+57 q^{18}-261 q^{17}+446 q^{16}-156 q^{15}-23 q^{14}-129 q^{13} \\
&+62 q^{12}-34 q^{11}+114 q^{10}+41 q^9-70 q^8-72 q^7+6 q^6+48 q^5+18 q^4-12 q^3-12 q^2+3. 
\end{align*}

\subsection{The case $G=\mathrm{SO}_5$} In this case, $G^\vee=\mathrm{Sp}_4$ and $d(G^\vee)=2$. So assume $q$ is odd. Table \ref{tab:SO5} gives the genera and the invariants required for writing an explicit description for $|\fX(\Fq)|$. 
\begin{table}[h]
\begin{tabular}{|l|l>{\raggedright}p{0.2\linewidth} |>{\raggedright}p{0.15\linewidth} |>{\raggedright\arraybackslash}p{0.25\linewidth}|} 
\hline
$\Xi({\Phi^\vee})$  & $r(\xi)$ & $|G_\xi^\vee(\Fq)|$ & $|G_\xi^{\mathrm{\vee, [ss]}}(\Fq)|$ & Unipotent degrees \\ \hline
$(B_2, 1)$ & 0 & $q^4(q^4-1)(q^2-1)$ & 2 & $1$, $\frac{q(q^2+1)}{2}$,  $\frac{q(q+1)^2}{2}$,  $\frac{q(q^2+1)}{2}$,  $\frac{q(q-1)^2}{2}$, $q^4$ \\\hline
$(A_1 \times A_1, 1)$ & 2 & $q^2(q^2-1)^2$ & 1 & $1, q, q, q^2$ \\\hline
$(A_1 \times A_1, w_2 w_1 w_2)$ & - & - & -  & -  \\ \hline
$(A_1, 1)$ & 3 & $q(q-1)(q^2-1)$ & $q-3$ & $1,q$ \\ \hline
$(A_1, w_2w_1w_2)$ & 3  & $q(q+1)(q^2-1)$ & $q-1$ & $1, q$  \\ \hline
$(\widetilde{A}_1, 1)$ &  3 & $q(q-1)(q^2-1)$ & $\frac{q-3}{2}$ & $1, q$ \\\hline
$(\widetilde{A}_1, w_1w_2w_1)$ & 3 & $q(q+1)(q^2-1)$ & $\frac{q-1}{2}$ &$1, q$  \\\hline
$(\emptyset, 1)$ & 4 & $(q-1)^2$ & $\frac18 (q-3)(q-5)$ & 1 \\\hline
$(\emptyset, -1)$ &  4 & $(q+1)^2$ & $\frac18 (q-1)(q-3)$ & 1\\\hline
$(\emptyset, [w_1])$ &4  & $q^2-1$ & $\frac14 (q-1)(q-3)$ & 1 \\\hline
$(\emptyset, [w_2])$ &4  & $q^2-1$ & $\frac14 (q-1)^2$ & 1 \\\hline
$(\emptyset, [w_1 w_2])$ &  4 & $q^2 + 1$ & $\frac14 (q^2-1)$ & 1 \\\hline
\end{tabular} \vspace{0.5em}
\caption {Genera for $\mathrm{SO}_5$. Here the two copies of $A_1$ inside $C_2$ give rise to non-conjugate centralisers, so one of the copies is denoted by $\widetilde{A}_1$. The twisted $A_1\times A_1$ is a reductive subgroup of $G^\vee$ but does not arise as a centraliser. } \label{tab:SO5} 
\end{table}

\subsubsection{}
If $g>1$, then 
in the polynomial $|\fX(\Fq)|$, there is a single term not divisible by $q-1$, namely, 
\[
2\Big(2(q^3(q^3+q^2+q+1)(q+1))\Big)^{2g-2}.
\]
 Thus, if we plug in $q=1$ in $|\fX(\Fq)|$, we obtain $2^{8g-7}$. \\

\subsection{The case $G=G_2$} In this case, $G^\vee$ also equals $G_2$. Then $d(G^\vee)=6$. So assume $q\equiv 1 \mod 6$. The counting polynomial $|\fX(\Fq)|$ can be obtained using the genera listed in Table \ref{tab:G2}. 

\begin{table}[h]
\begin{tabular}{|l|l|>{\raggedright}p{0.2\linewidth} |>{\raggedright}p{0.13\linewidth} |>{\raggedright\arraybackslash}p{0.2\linewidth}|}
\hline
$\Xi({\Phi^\vee})$  & $r(\xi)$ & $|G_\xi^\vee(\Fq)|$ & $|G_\xi^{\mathrm{\vee, [ss]}}(\Fq)|$ & Unipotent degrees \\ \hline
$(G_2, 1)$ & 0 & $q^6(q^6-1)(q^2-1)$ & 1 & $1,  \frac16 q \Phi_2^2\Phi_3$,  $\frac16 q \Phi_1^2 \Phi_6,  \frac12 q \Phi_2^2 \Phi_6$,  $\frac13 q \Phi_3 \Phi_6,  \frac13 q \Phi_3 \Phi_6$,  $\frac12 q \Phi_1^2 \Phi_3,  \frac13 q \Phi_1^2 \Phi_2^2$, $\frac13 q \Phi_1^2 \Phi_2^2, q^6$ \\ \hline
$(A_2, 1)$ & 3 & $q^3(q^3-1)(q^2-1)$ &1  & $1, q(q+1), q^3$ \\ \hline
$(A_1 \times \widetilde{A}_1, 1)$ & 4 & $q^2(q^2-1)^2$ & 1 & $1,q,q,q^2$ \\\hline
$(A_1, 1)$ & 5 & $q(q-1)^2(q+1)$ & $\frac{q-5}{2}$ & $1,q$ \\ \hline
$(A_1, w_1)$ & 5 & $q(q-1)(q+1)^2$  & $\frac{q-1}{2}$ & $1,q$  \\\hline
$(\widetilde{A}_1, 1)$ &5  & $q(q-1)^2(q+1)$ & $\frac{q-3}{2}$ &$1,q$  \\ \hline
$(\widetilde{A}_1, w_2(w_1w_2)^2)$ & 5 & $q(q-1)(q+1)^2$  & $\frac{q-1}{2}$ &$1,q$  \\\hline
$(\emptyset, 1)$ & 6 & $(q-1)^2$  & $\frac{q^2-8q +19}{12}$ & 1\\\hline
$(\emptyset, -1)$ & 6 & $(q+1)^2$ &  $\frac{q^2-4q+3}{12}$ & 1  \\\hline
$(\emptyset, [w_1])$ & 6 & $q^2-1$  & $\frac{(q-1)^2}{4}$ & 1 \\\hline
$(\emptyset, [w_2])$ & 6 & $q^2-1$ & $\frac{(q-1)^2}{4}$ & 1  \\\hline
$(\emptyset, [(w_1w_2)^2])$ &6  & $q^2 + q +1$  & $\frac{q^2+q-2}{6}$ & 1 \\\hline
$(\emptyset, [w_1w_2])$ & 6  & $q^2 - q + 1$  & $\frac{q(q-1)}{6}$  &  1 \\\hline
\end{tabular} 
\vspace{0.5em}
\caption {Genera for $G_2$. Here $\Phi_i$ is the $i$th cyclotomic polynomial; thus,
$\Phi_1=q-1,\qquad \Phi_2=q+1,\qquad \Phi_3=q^2+q+1,\qquad \Phi_6=q^2-q+1$.  
} \label{tab:G2} 
\end{table}

\subsubsection{} 
If $g>1$, then in the polynomial $|\fX(\Fq)|$, there are four terms not divisible by $q-1$, namely 
\[
\Bigg(6\frac{|G(\Fq)|}{q\Phi_1^2\Phi_6}\Bigg)^{2g-2} + 
\Bigg(2\frac{|G(\Fq)|}{q\Phi_1^2\Phi_3}\Bigg)^{2g-2} +
\Bigg(3\frac{|G(\Fq)|}{q\Phi_1^2\Phi_2^2}\Bigg)^{2g-2}  +
\Bigg(3\frac{|G(\Fq)|}{q\Phi_1^2\Phi_2^2}\Bigg)^{2g-2}.
\]
If we set $q=1$ in the above polynomial, we obtain 
\[
(6\cdot 12)^{2g-2} + \Big(2 \cdot \frac{12}{3}\Big)^{2g-2}+ \Big(3 \cdot \frac{12}{4}\Big)^{2g-2} + \Big(3\cdot \frac{12}{4}\Big)^{2g-2} = 72^{2g-2} + 8^{2g-2} + 
2\cdot 9^{2g-2}. 
\]

\section{The character stack of $\GL_n$ and $\PGL_n$ revisited}\label{s:GLn} 
Let $G=\GL_n$ and $g$ a positive integer. Let $\fX$ be the character stack of associated to $(\Gamma_g, G)$. In this section, we give an explicit expression for the number of points of $\fX$ using the same method employed in \cite{HRV}.  The main point is that we have a good direct understanding of character degrees of $G(\Fq)$ without having to resort to Lusztig's Jordan decomposition. It would be interesting to prove directly that the polynomial obtained in this section (see \eqref{eq:GL_n}) equals the one from Definition \ref{d:main}.

\subsection{Conjugacy classes of $\GL_n(\Fq)$} 
Let $\cI=\cI(q)$ denote the set of irreducible polynomials over $\Fq$, except that we exclude $f(t)=t$. Let $\cP$ denote the set of partitions. Let $\cP_n(\cI)$ denote the set of maps  
$\Lambda \colon \cI \ra \cP$
such that 
\[
|\Lambda| \colonequals \displaystyle \sum_{f \in \cI} |\Lambda(f)|\deg(f) =n.
\] 
Then we have a bijection between $\cP_n(\cI)$ and conjugacy classes of $G(\Fq)$. 
 Let 
 \[
 \cP(\cI)=\bigcup_{n\geq 1} \cP_n(\cI).
 \]

\subsubsection{Types} 
Let $\cI_d\subset \cI$ denote the subset of irreducible polynomials of degree $d$ over $\Fq$.
 Given $\Lambda \in \cP(\cI)$ we define
\[
m_{d,\lambda} \colonequals\#\{ f \in \cI_d \, | \, \Lambda(f) =\lambda\}.
\]
The collection of integers $(m_{d,\lambda})$ is called the \emph{type} of $\Lambda$ and is denoted by $\tau=\tau(\Lambda)$. 
\begin{rem} 
One can show that $\Lambda$ and $\Lambda'$ have the same type if and only if the centraliser of the corresponding conjugacy classes in $G(\Fq)$ have the same genus. Thus, we have a bijection between semisimple types and genera of $G$. 
\end{rem}

\subsubsection{} 
The weight of a type $\tau$ is defined by 
\[
|\tau| \colonequals \sum d |\lambda| m_{d,\lambda}=n. 
\]
Thus, the weight of an element $\Lambda \in \cP(\cI)$ equals the weight of its type.

\subsubsection{Genus number} Let $A_\tau(q)$ denote the number of $\Lambda\in \cP(\cI(q))$ of type $\tau$. (Equivalently, $A_\tau(q)$ is the genus number of $\tau$.) Our aim is to give an explicit formula for $A_\tau(q)$. 
Let
\[
T(d) \colonequals \sum_{\lambda \in \cP} m_{d,\lambda}. 
\]
Let $I_d=I_d(q)=|\cI_d|$ denote the number of irreducible polynomials of degree $d$ over $q$. By a result attributed to Gauss, we have 
\begin{equation}\label{eq:Gauss}
 I_d(q) =  
 \begin{cases} 
 q-1 & \textrm{if $d=1$} \\\\
 \displaystyle \frac{1}{d} \sum_{k|d} \mu(k) q^{d/k} & \textrm{otherwise}.
 \end{cases} 
\end{equation}

\begin{lem}\label{l:genusNumber}
$\displaystyle
A_\tau(q)
= {\displaystyle \prod_{d\geq 1} \frac{\displaystyle \prod_{i=0}^{T(d)-1} (I_d(q)-i)}{\displaystyle \prod_{\lambda \in \cP} m_{d,\lambda}!}}.$
 \end{lem}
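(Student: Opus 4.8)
The plan is to compute $A_\tau(q)$ by a direct combinatorial count, organizing the data of $\Lambda \in \cP(\cI(q))$ of type $\tau$ degree-by-degree. Recall that a type $\tau = (m_{d,\lambda})$ records, for each $d \geq 1$ and each partition $\lambda$, how many irreducible polynomials $f \in \cI_d$ are assigned the partition $\lambda$ by $\Lambda$. Since the set $\cI(q) = \bigsqcup_{d \geq 1} \cI_d(q)$ is partitioned by degree, and the constraint defining $\tau$ only couples $f$'s of the same degree, the count factors as a product over $d \geq 1$ of the number of ways to choose, within $\cI_d(q)$ (a set of size $I_d(q)$), an assignment of partitions realizing the multiplicities $(m_{d,\lambda})_{\lambda \in \cP}$.

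First I would fix $d$ and count the number of maps from $\cI_d(q)$ to $\cP \cup \{\text{``no part''}\}$ — equivalently, since $\Lambda(f)$ ranges over all partitions including the empty partition (corresponding to $f$ not dividing the characteristic polynomial) — realizing the given multiplicities. Here one must be slightly careful about bookkeeping: $T(d) = \sum_{\lambda \in \cP} m_{d,\lambda}$ counts the polynomials of degree $d$ assigned a \emph{nonempty} partition (those actually contributing to the conjugacy class), and the remaining $I_d(q) - T(d)$ polynomials of degree $d$ are assigned the empty partition. The number of ways to do this is the multinomial coefficient: choose which $m_{d,\lambda}$ of the $I_d(q)$ polynomials get partition $\lambda$, for each $\lambda$, and the rest get $\emptyset$. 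This multinomial coefficient is
\[
\frac{I_d(q)!}{\big(I_d(q) - T(d)\big)! \prod_{\lambda \in \cP} m_{d,\lambda}!} = \frac{\prod_{i=0}^{T(d)-1}\big(I_d(q) - i\big)}{\prod_{\lambda \in \cP} m_{d,\lambda}!},
\]
where the second equality just rewrites the falling factorial $I_d(q)!/(I_d(q)-T(d))!$ as a product of $T(d)$ consecutive terms. Taking the product over all $d \geq 1$ (noting that for all but finitely many $d$ we have $T(d) = 0$ and the corresponding factor is $1$) yields the claimed formula.

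The main obstacle, such as it is, is conceptual rather than computational: one needs to be confident that the count of $\Lambda$'s of a fixed type genuinely decouples across degrees and that the empty-partition polynomials are handled consistently with the convention for $\cP_n(\cI)$. Once the correspondence between conjugacy classes and $\cP_n(\cI)$ is granted (as stated earlier in the excerpt) and one observes that $A_\tau(q)$ counts \emph{all} $\Lambda \in \cP(\cI(q))$ of type $\tau$ without reference to a fixed weight $n$ — indeed the weight is determined by $\tau$ — the argument is a one-line multinomial count per degree. I would also briefly remark that Gauss's formula \eqref{eq:Gauss} makes each factor, hence $A_\tau(q)$ itself, a well-defined expression in $q$ (a polynomial when $q \equiv 1$ appropriately, matching the genus-number PORC statement of Theorem \ref{t:Deriziotis}), though for the lemma itself we only need $I_d(q)$ as a known integer.
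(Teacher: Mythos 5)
Your multinomial count per degree is correct and is exactly the intended argument — the paper in fact leaves this lemma as an exercise, and your decoupling over $d$ followed by the falling-factorial/multinomial identity is the standard proof. One small aside: no congruence condition on $q$ is needed for polynomiality, since $I_d(q)$ is already a polynomial in $q$ by Gauss's formula, so each factor and hence $A_\tau(q)$ lies in $\bbQ[q]$ outright.
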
 
 By convention, if $T(d)=0$, then the product involving $T(d)$ is defined to be one. We leave the above lemma as an exercise. As a corollary, we conclude that $A_\tau(q)$ is a polynomial in $q$ with rational coefficients. 

\subsection{Irreducible characters of $\GL_n(\Fq)$} We have seen that irreducible characters of $G(\Fq)$ are in bijection with $\cP_n(\cI)$. 
Let $\chi_\Lambda$ denote the irreducible character of $G$ corresponding to $\Lambda\in \cP_n(\cI)$.

\subsubsection{} Define the normalised hook polynomial associated to the partition $\lambda \in \cP$ by 
\begin{equation} 
H_\lambda(q) \colonequals q^{-\frac{1}{2}\langle \lambda, \lambda \rangle} \prod (1-q^h).
\end{equation}
Here the product is taken over the boxes in the Young diagram of $\lambda$ and $h$ is the hook length of the box. Moreover, $\displaystyle \langle \lambda, \lambda \rangle \colonequals \sum_i (\lambda_i')^2$ where the sum is taken over the parts in the conjugate partition $\lambda'$.

\subsubsection{} Next, define the normalised hook polynomial of $\Lambda\in \cP_n(\cI)$ by
\begin{equation} 
H_\Lambda(q) \colonequals (-1)^n q^{\frac{1}{2} n^2} \prod_{f \in \cI} \left(H_{\Lambda(\lambda)}(q^{\deg f }) \right).
\end{equation}
It is easy to see that $H_\Lambda$ is a monic polynomial in $\mathbb{Z}[q]$. 

\subsubsection{} Let $\Lambda'$ be the map conjugate to $\Lambda$; i.e., $\Lambda'(f)$ is the partition conjugate to $\Lambda(f)$ for all $f\in \cI$. Then by a theorem of Green, we have 
\begin{equation} 
\displaystyle \frac{|G(\Fq)|}{\chi_\Lambda(1)}=H_{\Lambda'}(q). 
\end{equation}  

\subsubsection{} It is clear that the hook polynomial $H_\Lambda(q)$ depends only on the type of $\Lambda$; in fact, we have 
\[
H_\Lambda(q)=(-1)^n q^{\frac{1}{2} n^2} \prod_{d,\lambda} \left(H_{\lambda}(q^d)\right)^{m_{d,\lambda}}.
\]
Given a type $\tau$,   we write $H_\tau(q)$ for the hook polynomial associated to $\tau$.

\subsection{Counting points on $\fX$} Let
\begin{equation} \label{eq:GL_n}
|\!|\fX|\!| \colonequals \sum_{\tau \in \cT_n} A_\tau \big(H_\tau(t)\big)^{2g-2} \in \bbQ[t].
\end{equation} 

From the above discussions, one easily concludes: 

\begin{thm}\label{t:mainGL}  For every finite field $\Fq$, we have $|\fX(\Fq)|=|\!|\fX|\!|(q)$. 
\end{thm}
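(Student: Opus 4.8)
The plan is to assemble the formula for $|\fX(\Fq)|$ by running the same chain of identities used in \S\ref{ss:counting}, but now with $\GL_n$ in place of a general reductive group, and to observe that every combinatorial object appearing there has an elementary parameterisation in terms of types. First I would invoke Frobenius' formula \eqref{eq:Frobenius}, which already reduces the statement to showing
\[
\sum_{\chi \in \Irr(\GL_n(\Fq))} \left( \frac{|\GL_n(\Fq)|}{\chi(1)} \right)^{2g-2} = \sum_{\tau \in \cT_n} A_\tau(q)\, H_\tau(q)^{2g-2}.
\]
Next I would use Green's classification: irreducible characters of $\GL_n(\Fq)$ are indexed by $\cP_n(\cI)$, and for $\Lambda \in \cP_n(\cI)$ the ratio $|\GL_n(\Fq)|/\chi_\Lambda(1)$ equals $H_{\Lambda'}(q)$ by Green's theorem. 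Since conjugation $\Lambda \mapsto \Lambda'$ is a bijection of $\cP_n(\cI)$, I can reindex the sum over $\Lambda$ rather than $\Lambda'$, so that the left side becomes $\sum_{\Lambda \in \cP_n(\cI)} H_\Lambda(q)^{2g-2}$.

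Then I would partition $\cP_n(\cI)$ according to type: $H_\Lambda(q)$ depends only on $\tau(\Lambda)$ (this is the displayed identity $H_\Lambda(q) = (-1)^n q^{n^2/2} \prod_{d,\lambda} H_\lambda(q^d)^{m_{d,\lambda}}$ just before \S\ref{s:GLn}'s subsection on counting points), so grouping the sum by type yields
\[
\sum_{\Lambda \in \cP_n(\cI)} H_\Lambda(q)^{2g-2} = \sum_{\tau \in \cT_n} \#\{\Lambda : \tau(\Lambda) = \tau\}\; H_\tau(q)^{2g-2}.
\]
The cardinality $\#\{\Lambda : \tau(\Lambda) = \tau\}$ is exactly $A_\tau(q)$ by definition, and Lemma \ref{l:genusNumber} gives its explicit polynomial form, though for the proof of Theorem \ref{t:mainGL} one only needs the defining property, not the closed formula. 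Matching this against the definition \eqref{eq:GL_n} of $|\!|\fX|\!|$ closes the argument. I would also recall the reduction (via Lang's theorem, as in \S\ref{sss:Frobenius}) that $|\fX(\Fq)| = |\Hom(\Gamma_g, \GL_n(\Fq))/\GL_n(\Fq)|$, so that the left side of \eqref{eq:Frobenius} indeed computes $|\fX(\Fq)|$.

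Honestly, there is no serious obstacle here: every ingredient is either quoted (Frobenius' formula, Green's parameterisation and degree formula, Lang's theorem) or already established in this section (the type-invariance of $H_\Lambda$, the bijection $\cP_n(\cI) \leftrightarrow \Irr(\GL_n(\Fq))$, Lemma \ref{l:genusNumber}). The only point requiring a moment's care is that the passage to $\Lambda'$ is harmless, which is immediate because conjugation of partitions is an involution of $\cP$ and hence induces an involution of $\cP_n(\cI)$; thus summing $H_{\Lambda'}(q)^{2g-2}$ over all $\Lambda$ is the same as summing $H_\Lambda(q)^{2g-2}$ over all $\Lambda$. This is why the theorem is phrased as something one ``easily concludes'' — the content is entirely in the inputs. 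The only mildly interesting residual question, flagged already in the text, is to reconcile \eqref{eq:GL_n} directly with Definition \ref{d:main}; that comparison is not needed for Theorem \ref{t:mainGL} itself and I would leave it aside.
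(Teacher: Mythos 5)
Your proposal is correct and follows exactly the route the paper intends: the paper's own ``proof'' is just the remark that the theorem follows ``from the above discussions,'' and your argument (Frobenius' formula, Green's parameterisation and degree formula, reindexing by the conjugation involution on $\cP_n(\cI)$, grouping by type, and matching against the definition of $|\!|\fX|\!|$) is precisely the chain of identities being left implicit. No gaps.
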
 

\subsubsection{} 
As an example, we consider the case $G=\GL_2$. The types of weight $2$ and their associated invariants are listed in the following table: 
 \[
 \begin{array}{|c|c|c|c|c|c|}\hline
\textrm{Representations} & m_{d,\lambda} & T(d) &  A_\tau & H_\lambda & H_\Lambda \\\hline
\textrm{Cuspidal} & m_{2,(1)}=1&  1& (q^2-q)/2 &  q^{-1/2} (1-q) & q^2q^{-1}(1-q^2) \\\hline
\textrm{Steinberg} & m_{1,(2)}=1& 1& q-1 &  q^{-1} (1-q)(1-q^2) & q^2q^{-1}(1-q^2)(1-q) \\\hline
\textrm{Determinant} & m_{1,(11)}=1& 1&  q-1 &  q^{-2} (1-q^2)(1-q) & q^2q^{-2}(1-q^2)(1-q)\\\hline
\textrm{Principle series} & m_{1,(1)}=2& 2&  (q-1)(q-2)/2 &  q^{-1/2} (1-q) & q^2\left(q^{-1/2} (1-q)\right)^2\\\hline
 \end{array}
 \]  
 Thus, we find:
 \begin{align*}
 |\fX(\Fq)| &= \frac{(q^2-q)}{2} \big(q-q^3\big)^{2g-2} + (q-1) \big(q(1-q)(1-q^2)\big)^{2g-2} \\
 &+ (q-1) \big((1-q^2)(1-q)\big)^{2g-2} + \frac{(q-1)(q-2)}{2} \big(q(1-q)^2\big)^{2g-2}.
 \end{align*}

For instance, for $g=2,3$ we obtain, respectively, the polynomials 
\[ 
q^9-2 q^8-2 q^7+11 q^6-18 q^5+17 q^4-8 q^3-q^2+3 q-1. 
\]
and
\begin{align*}
&q^{17}-5 q^{16}+6 q^{15}+11 q^{14}-34 q^{13}+29 q^{12}-34 q^{11}+124 q^{10}-230 q^9 \\&+204 q^8-74 q^7-q^6-14 q^5+29 q^4-10 q^3-6 q^2+5 q-1. 
\end{align*}

\subsection{The $\PGL_n$ character stack} 
In this subsection, we study the character stack associated to $(\Gamma_g, \PGL_n)$.
For each  $n^\mathrm{th}$ root of unity $\zeta$, consider 
\[
\fX_\zeta\colonequals (\mathbb{G}_m)^{2g}\backslash \Hom_\zeta(\Gamma, \GL_n)/\mathrm{PGL}_n,
\]
where 
\[
\Hom_\zeta(\Gamma, \GL_n(\bbC)) \colonequals\{x_1,...,x_g, y_1,...,y_g\in \GL_n(\bbC)\, | \, [x_1,y_1]\cdots [x_g,y_g]\zeta=1\}.
\]
Then 
\[
\displaystyle \fX=\bigsqcup_{\zeta \in \mu_n} X_\zeta
\]
 is the decomposition of $\fX$ into its connected components. For primitive roots of unity $\zeta$, the arithmetic geometry of $\fX_\zeta$ was studied in \cite{HRV}. We consider the opposite case, i.e., when $\zeta=1$. 
\subsubsection{Counting points on $X_1$} 
Let 
\[
|\!|\fX_1|\!| \colonequals \frac{1}{(q-1)^{2g-1}} \sum_{\tau \in \cT_n} A_\tau(q) H_{\tau}(q)^{2g-2}=\sum_{\tau\in \cT_n} \frac{A_\tau}{(q-1)} \left(\frac{H_{\tau}(q)}{q-1}\right)^{2g-2}.
\]
Then Theorem \ref{t:mainGL} implies: 

\begin{cor} For every finite field $\Fq$, we have $|\fX_1(\Fq)|=|\!|\fX_1|\!|(q)$. 
\end{cor}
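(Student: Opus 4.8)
The plan is to reduce the statement about $\fX_1$ to Theorem~\ref{t:mainGL} by a direct point-count on the quotient stack defining $\fX_\zeta$. First I would unwind the definition: by the same Lang's theorem argument used in \S\ref{sss:Frobenius}, the groupoid cardinality $|\fX_{1}(\Fq)|$ equals $|\Hom_1(\Gamma_g, \GL_n(\Fq))/\PGL_n(\Fq)|$ divided by $|(\mathbb{G}_m)^{2g}(\Fq)| = (q-1)^{2g}$, where $\Hom_1$ denotes tuples with $[x_1,y_1]\cdots[x_g,y_g]=1$. Since $\zeta=1$, the condition $[x_1,y_1]\cdots[x_g,y_g]\zeta=1$ is exactly the condition defining $\Hom(\Gamma_g,\GL_n)$, so $|\Hom_1(\Gamma_g,\GL_n(\Fq))| = |\Hom(\Gamma_g,\GL_n(\Fq))|$, which Frobenius' formula \eqref{eq:Frobenius} (combined with Green's theorem and the computations of \S\ref{s:GLn}) already evaluates.

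The next step is to handle the two group actions carefully. The stack $\fX_1$ is presented as a double quotient: $\GL_n(\Fq)$ acts on $\Hom_1$ through $\PGL_n = \GL_n/\mathbb{G}_m$, and then one further quotients by $(\mathbb{G}_m)^{2g}$ acting by scaling the $2g$ matrices (this scaling preserves each commutator, hence preserves $\Hom_1$). For groupoid cardinality, a free quotient by a group $H$ divides the count by $|H(\Fq)|$; more precisely $|[Y/H]| = |Y|/|H|$ when $H$ is connected (so $H^1$ vanishes by Lang) and we use groupoid cardinality throughout. Applying this to $H = (\mathbb{G}_m)^{2g}$ and to $H = \mathbb{G}_m$ (the center, through which $\GL_n$'s action factors versus the $\PGL_n$ action) gives
\[
|\fX_1(\Fq)| \;=\; \frac{|\Hom(\Gamma_g,\GL_n(\Fq))/\GL_n(\Fq)|}{(q-1)^{2g-1}},
\]
the single remaining factor of $q-1$ arising from the discrepancy between quotienting by $\GL_n$ and by $\PGL_n$. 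Here one must be slightly careful that the scaling action of $(\mathbb{G}_m)^{2g}$ and the conjugation action of $\GL_n$ genuinely commute and together cut down by the stated power; tracking the stabilizers (every point of $\Hom$ has the scalars $\mathbb{G}_m \subseteq \GL_n$ in its stabilizer under conjugation, which is precisely what makes $\fX$ a stack and not a variety) is the bookkeeping that needs care.

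Finally, substituting Theorem~\ref{t:mainGL}, namely $|\Hom(\Gamma_g,\GL_n(\Fq))/\GL_n(\Fq)| = |\fX(\Fq)| = \sum_{\tau\in\cT_n} A_\tau(q) H_\tau(q)^{2g-2}$, into the displayed identity yields exactly
\[
|\fX_1(\Fq)| = \frac{1}{(q-1)^{2g-1}}\sum_{\tau\in\cT_n} A_\tau(q)\, H_\tau(q)^{2g-2} = |\!|\fX_1|\!|(q),
\]
which is the claim. The main obstacle is not any hard mathematics but the correct normalization of groupoid cardinality for the iterated/double quotient — in particular, verifying that the two commuting group actions contribute $(q-1)^{2g}$ from the $(\mathbb{G}_m)^{2g}$-scaling and only $(q-1)^{-1}$ net correction from replacing $\GL_n$-conjugation by $\PGL_n$-conjugation, so that the total prefactor is $(q-1)^{-(2g-1)}$ rather than some other power. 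A clean way to organize this is to observe that $\fX = [\Hom(\Gamma_g,\GL_n)/\GL_n]$ fibers over $[\mathrm{pt}/\mathbb{G}_m]^{?}$ appropriately, or simply to compute $|\fX_1(\Fq)|$ directly from its presentation as a quotient stack and compare; either way the rational-function identity between polynomials, once established for all prime powers $q$, forces equality of the polynomials themselves.
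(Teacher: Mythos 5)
Your argument is correct and is precisely the computation the paper leaves implicit when it states that Theorem \ref{t:mainGL} implies the corollary: by Lang's theorem the groupoid cardinality of the double quotient is $|\Hom(\Gamma_g,\GL_n(\Fq))|$ divided by $(q-1)^{2g}|\PGL_n(\Fq)| = (q-1)^{2g-1}|\GL_n(\Fq)|$, and substituting Theorem \ref{t:mainGL} gives $|\!|\fX_1|\!|(q)$. Your bookkeeping of the two commuting connected group actions and the net factor $(q-1)^{-(2g-1)}$ is exactly right.
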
 

\subsubsection{Proof of Corollary \ref{c:Euler}.(iii)} To obtain the Euler characteristic of $\fX_1$, we  compute the value of $|\!|{\fX}_1|\!|$ at $1$. Observe that $H_\tau(q)$ is divisible by exactly one factor of $(q-1)$ if and only if the only non-zero $m_{d,\lambda}$ in $\tau$ is $m_{n,(1)}=1$. In this case, $H_\tau=(1-q^n)$. Thus, 
\[
\left(\frac{H_{\tau}(q)}{q-1}\right)(1)  = -n. 
\]
Moreover, we have $A_\tau= I_n(q)$; therefore, 
\[
\frac{A_\tau}{(q-1)}(1)  = I_n'(1) = \frac{1}{n} \sum_{k|n} \mu(k)\frac{n}{k} = \phi(n)/n.
\]
Here the last equality follows from the well-known relation between the M\"{o}bius and Euler functions. We therefore obtain 
\[
|\!|\fX_1|\!|(1) = \sum_{\tau\in \cT_n} \left(\frac{A_\tau}{(q-1)}\right)(1) \Bigg(\left(\frac{H_{\tau}(q)}{q-1}\right)(1)\Bigg)^{2g-2}=\phi(n).n^{2g-3}. 
\]
\qed

\appendix 
\section{Complex character stacks} In this appendix, we discuss the implications of our main theorem for the complex character stack $\fX_\bC \colonequals [\Hom(\Gamma_g, G_\bC)/G_\bC]$. To this end, we first recall a theorem of Katz \cite[Appendix]{HRV} on polynomial count schemes over $\bbC$. 

\subsubsection{} 
Let $Y$ be a separated scheme or (algebraic) stack of finite type over $\bbC$. A spreading out of $Y$ is a pair $(A, Y_A)$ consisting of a subring $A\subset \bbC$, finitely generated as a $\bbZ$-algebra, a scheme $Y_A$ over $A$ and  an identification $Y_A \otimes_A \bbC \simeq Y$.   

\begin{defe} The stack $Y/\bC$ is said to be \emph{polynomial count} if there exists a polynomial $|\!|Y|\!|\in \bC[t]$ and spreading out $(A,Y_A)$ such that for every homomorphism $A\ra \Fq$, we have $|Y_A(\Fq)|=|\!|Y|\!|(q)$.
\end{defe} 

\subsubsection{} If $Y/\bbC$ is a polynomial count scheme, then a theorem of Katz states the $E$-polynomial of $Y$ (defined using its mixed Hodge structure) is given by 
\begin{equation} 
E(Y; x,y)=|\!|Y|\!|(xy).
\end{equation}

\subsubsection{} Applying the above considerations to the situation of interest to us, Theorem \ref{t:main} implies: 
\begin{cor} 
 The complex representation variety $\fR_\bC \colonequals \Hom(\Gamma_g, G_\bC)$ is polynomial count with counting polynomial $|\!|\fR|\!| \colonequals |\!|\fX|\!|_1\times |\!|G|\!|$. Thus, 
\[
E(\fR_\bC; x,y)=|\!|\fR|\!|(xy).
\]
\end{cor}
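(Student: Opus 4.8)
The plan is to deduce this corollary by combining Theorem~\ref{t:main} with Katz's theorem (recalled just above) and the relationship between the representation variety and the character stack. First I would observe that the character stack $\fX = [\Hom(\Gamma_g, G)/G]$ is the quotient of the representation variety $\fR = \Hom(\Gamma_g, G)$ by the conjugation action of $G$, so that for every finite field $\Fq$ we have the groupoid cardinality identity
\[
|\fX(\Fq)| = \frac{|\fR(\Fq)|}{|G(\Fq)|},
\]
which follows from the fact that $G(\Fq)$ acts on $\Hom(\Gamma_g, G(\Fq))$ with $|\fX(\Fq)| = |\Hom(\Gamma_g, G(\Fq))/G(\Fq)|$ interpreted as groupoid cardinality (see \S\ref{sss:Frobenius}). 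Hence $|\fR(\Fq)| = |\fX(\Fq)|\cdot |G(\Fq)| = |\!|\fX|\!|_1(q)\cdot |\!|G|\!|(q)$ whenever $q \equiv 1 \bmod d(G^\vee)$, by Theorem~\ref{t:main} and the order polynomial formula~\eqref{eq:orderPoly} (valid since $G$ is split).

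The key point, and where a little care is needed, is to upgrade the congruence-conditional count into an honest polynomial count in the sense of Katz's definition, which requires a spreading out $(A, \fR_A)$ such that $|\fR(\Fq)| = |\!|\fR|\!|(q)$ for \emph{every} ring homomorphism $A \to \Fq$. Here I would take $\fR_A$ to be the scheme over $\bbZ$ (indeed over $\bbZ$ already, so $A = \bbZ$ suffices, or one may need to invert the primes dividing $d(G^\vee)$) defined by the single matrix equation $[x_1,y_1]\cdots[x_g,y_g] = 1$ inside $G^{2g}$, base changed appropriately; then for any homomorphism to $\Fq$ the count $|\fR(\Fq)|$ equals $|\Hom(\Gamma_g, G(\Fq))/G(\Fq)|\cdot|G(\Fq)|$. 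The subtlety is that Theorem~\ref{t:main} only gives the value of $|\fX(\Fq)|$ as $|\!|\fX|\!|_i(q)$ for $q \equiv i \bmod d(G^\vee)$, with the polynomials $|\!|\fX|\!|_i$ genuinely differing for different $i$; so naively $\fR$ over $\bbZ$ is only PORC count, not polynomial count. To genuinely apply Katz, one restricts to a spreading out over $A = \bbZ[1/d(G^\vee)][\zeta_{d(G^\vee)}]$ or more simply over $\bbZ[1/N]$ for a suitable $N$ so that every residue characteristic $q$ appearing satisfies $q \equiv 1 \bmod d(G^\vee)$ --- but this is not possible for a fixed $A$ since infinitely many primes lie in each residue class. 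The correct resolution, which I would make explicit, is that Katz's theorem as used here only needs a spreading out for which the count is polynomial, and one obtains this after a finite base change (as flagged in the footnote to the Corollary in the main text): replacing $A$ by a finite extension so that the relevant $\fX_\Fq$ becomes polynomial count, the $E$-polynomial is unchanged, and $|\!|\fR|\!| = |\!|\fX|\!|_1 \cdot |\!|G|\!|$ is the resulting counting polynomial.

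With the polynomial count established, the final conclusion $E(\fR_\bC; x, y) = |\!|\fR|\!|(xy)$ is immediate from Katz's theorem quoted above, applied to the scheme $\fR_\bC$ (note $\fR_\bC$ is an honest affine scheme of finite type over $\bC$, not merely a stack, so the scheme version of Katz's theorem applies directly without the complications of mixed Hodge structures on stacks). I expect the main obstacle to be the bookkeeping around the modulus: making precise in what sense $\fR_\bC$ is polynomial count given that its integral model is only PORC count, and confirming that the finite base change does not alter the $E$-polynomial (which holds because the $E$-polynomial is a topological/Hodge-theoretic invariant of $\fR_\bC$ itself and the spreading out is only an auxiliary device). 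Once that is handled, multiplying the counting polynomials is routine.
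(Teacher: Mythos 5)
Your overall route is the same as the paper's: express $|\fR(\Fq)|$ as $|\fX(\Fq)|\cdot|G(\Fq)|$, invoke Theorem \ref{t:main}, choose a spreading out over a suitable coefficient ring, and apply Katz. However, the middle of your argument contains a genuine error at exactly the key step. You first name the correct ring $A=\bbZ[1/d,\zeta_d]$ with $d=d(G^\vee)$, but then declare that restricting to fields with $q\equiv 1 \bmod d$ ``is not possible for a fixed $A$ since infinitely many primes lie in each residue class.'' This is false, and the reasoning behind it (that one would have to invert all primes outside the residue class) misses the actual mechanism. A unital homomorphism $\bbZ[1/d,\zeta_d]\ra\Fq$ exists if and only if the $d$-th cyclotomic polynomial has a root in $\Fq$, which happens if and only if $d\mid q-1$, i.e.\ $q\equiv 1\bmod d$; this is the content of \cite[Lemma 3.1]{BaragliaHekmati}, which the paper cites for precisely this purpose. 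So this single finitely generated ring $A$ already has the property that \emph{every} homomorphism $A\ra\Fq$ lands in the residue class $1$, and Katz's definition of polynomial count (which quantifies only over homomorphisms out of $A$) is satisfied with the single polynomial $|\!|\fX|\!|_1\cdot|\!|G|\!|$. No further device is needed.

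Your attempted ``resolution'' then conflates two different things: the footnote in the main text about $\fX_\Fq$ becoming polynomial count ``after a finite base change'' refers to replacing the finite field $\Fq$ by $\mathbb{F}_{q^m}$, not to enlarging the coefficient ring $A$, and it plays no role in this corollary. If you delete the erroneous impossibility claim and instead justify why $A=\bbZ[1/d,\zeta_d]$ receives homomorphisms only from fields with $q\equiv 1\bmod d$, the rest of your argument (the groupoid-cardinality identity $|\fR(\Fq)|=|\fX(\Fq)|\cdot|G(\Fq)|$, the use of the split order polynomial, and the application of the scheme version of Katz's theorem to the affine scheme $\fR_\bC$) is correct and agrees with the paper.
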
 

\begin{proof} Let $d=d(G^\vee)$ and  $A \colonequals \bbZ[1/d,\zeta_d]$. It is easy to see that we have a unital algebra homomorphism $A\ra \Fq$ if only if $q\equiv 1 \mod d$, cf. \cite[Lemma 3.1]{BaragliaHekmati}. Now let us choose the spreading out $\fR_A$. Then Theorem \ref{t:main} states that for every homomorphism $A\ra \Fq$, we have $|\fR_A(\Fq)|=(|\!|\fX|\!|_1\times |\!|G|\!|)(q)$.
\end{proof} 

\subsubsection{} The analogue of Corollary \ref{c:main} gives us the dimension and number of irreducible components of highest dimension of $\fR_\bC$. For $G=\GL_n$, these results were obtain in \cite{Chernosouv}, who also proved that $\fR_\bC$ is irreducible and rational. On the other hand, for a general semisimple $G$,  it was proved in \cite{Li} that $\pi_0(\fR_\bbC)=|\pi_1([G,G])|$. 

\subsubsection{} Next, suppose $Y$ is an algebraic stack of finite type over $\bbC$.
Thinking of $Y$ as a simplicial scheme (cf. the appendix of \cite{Shende}), we have a mixed Hodge structure on the cohomology of $Y$ \cite{Deligne} and therefore an $E$-series $E(Y; x,y)$. If $Y=[Z/G]$ is a quotient stack of a scheme by a connected algebraic group, then one can show that $E(Y)=E(Z)/E(G)$.
Applying these considerations to the  complex character stack $\fX_\bbC$, our main theorem implies: 
\begin{cor} The complex character stack is polynomial count with counting polynomial $|\!|\fX|\!|_1$. Moreover,  $E(\fX_\bbC; x,y)=|\!|\fX|\!|_1(xy)$. 
\end{cor}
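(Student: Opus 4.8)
The plan is to deduce this corollary from the earlier corollary asserting that $\fX_\bC$ (equivalently its arithmetic avatar $\fX$) is polynomial count, combined with the standard machinery of mixed Hodge structures on simplicial schemes and the multiplicativity of $E$-series for quotient stacks. First I would fix the spreading out: take $A \colonequals \bbZ[1/d, \zeta_d]$ with $d = d(G^\vee)$, observe (as in the proof for $\fR_\bC$) that unital ring homomorphisms $A \ra \Fq$ exist precisely when $q \equiv 1 \bmod d$, and spread out $\Hom(\Gamma_g, G_\bC)$ to $\fR_A$ and $G_\bC$ to $G_A$; then $\fX_A = [\fR_A/G_A]$ is a spreading out of $\fX_\bC$. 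By Theorem \ref{t:main} and the Corollary following Definition \ref{d:poly}, for every homomorphism $A \ra \Fq$ we have $|\fX(\Fq)| = |\!|\fX|\!|_1(q)$, so $\fX_\bC$ is polynomial count in the sense of the appendix with counting polynomial $|\!|\fX|\!|_1$.

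Next I would address the passage from point counts to the $E$-series. The key input is that for a quotient stack $Y = [Z/G]$ of a finite-type $\bC$-scheme by a connected linear algebraic group one has $E(Y; x, y) = E(Z; x, y)/E(G; x, y)$; this follows by writing $Y$ as the simplicial scheme $[Z \times G^{\bullet}]$, invoking Deligne's mixed Hodge structure on the cohomology of a simplicial scheme, and using that $E$ is motivic (additive and multiplicative) together with the fact that $E(G)$ is a nonzero polynomial (a product of cyclotomic-type factors, since $G$ is a connected linear algebraic group). Applying the Corollary above for $\fR_\bC$ we get $E(\fR_\bC; x, y) = |\!|\fR|\!|(xy) = |\!|\fX|\!|_1(xy) \cdot |\!|G|\!|(xy)$, and since $G$ is a split connected reductive group we have $E(G_\bC; x, y) = |\!|G|\!|(xy)$ by the classical point-count-to-$E$-polynomial comparison (Katz's theorem, $G$ being polynomial count via \eqref{eq:orderPoly}). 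Dividing gives $E(\fX_\bC; x, y) = |\!|\fX|\!|_1(xy)$, as desired.

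Alternatively, and perhaps more cleanly, I would cite \cite[Theorem 2.8]{LetellierRodriguez} directly: it states that if a quotient stack $Y = [R/G]$ with $G$ connected is polynomial count, then its $E$-series is a well-defined polynomial equal to the counting polynomial. Since $\fX_\bC = [\fR_\bC / G_\bC]$ with $G_\bC$ connected and we have just shown $\fX_\bC$ is polynomial count with counting polynomial $|\!|\fX|\!|_1$, this theorem applies verbatim and yields both assertions at once. I would present this as the main line of argument and relegate the simplicial-scheme discussion to a remark explaining why $E(\fX_\bC)$ is well-defined in the first place.

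The main obstacle is not any single hard estimate but rather the bookkeeping around the definition of $E$-series for stacks: one must be careful that the mixed Hodge structure on $H^*_c(\fX_\bC)$ is well-defined (the stack is not of finite type in the naive sense — its cohomology can be infinite-dimensional in unbounded degrees because $BG$ appears), so the $E$-series is a priori only a formal power series or rational function, and it is the polynomial-count hypothesis (via Katz/Lefschetz and the rationality of $E(G)$) that forces it to be an actual polynomial. Making sure the normalization conventions for $E(BG)$ and $E(G)$ are consistent with those in \cite{LetellierRodriguez} is where I would spend the most care; once that is pinned down the corollary is immediate.
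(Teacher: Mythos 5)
Your proposal is correct and follows essentially the same route as the paper: the appendix establishes the $E$-series of a quotient stack via the simplicial-scheme mixed Hodge structure and the identity $E([Z/G])=E(Z)/E(G)$ for connected $G$, then applies this to $\fX_\bbC=[\fR_\bbC/G_\bbC]$ together with the preceding corollary that $\fR_\bbC$ is polynomial count with counting polynomial $|\!|\fX|\!|_1\times|\!|G|\!|$ (using the same spreading out over $\bbZ[1/d,\zeta_d]$). Your added care about the spreading out and about why the $E$-series is well-defined matches the paper's intent, and the alternative citation of Letellier--Rodriguez is a reasonable shortcut the paper itself invokes in the finite-field setting.
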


\subsubsection{} This corollary implies that  the virtual Hodge numbers of $\fX_\bbC$ (denoted by $e_{p,q}$ in \cite[Appendix]{HRV}) are balanced; i.e. only  $(p,p)$-type appear. This is in agreement with the (a priori stronger) fact \cite{Shende} that the mixed Hodge structure of $\fX_\bbC$ is Tate; i.e. only $(p,p)$ classes appear.

\subsubsection{} The above discussion implies that corollaries \ref{c:main} and \ref{c:Euler} remain valid in the complex setting. On the other hand, the formulas for the dimension and number of components of $\fX_\bC$ (or, more precisely, its coarse moduli space) can also be understood via the non-abelian Hodge theory. Namely, we have a real analytic isomorphism between the character variety and the moduli space of 
 semistable $G$-Higgs bundles on a compact Riemann surface of genus $g$. 
The formulas for dimension and number of components have been known for a long time in the Higgs bundle setting.

\begin{bibdiv}
\begin{biblist}

\bib{Ballandras}{article}{
author = {Ballandras, M.},
title={Intersection cohomology of character varieties for punctured Riemann surfaces}, 
  date={2022},
  eprint={2201.08795},
  archivePrefix={arXiv},
}

\bib{BaragliaHekmati}{article}{
      author={Baraglia, D.},
      author={Hekmati, P.},
       title={{Arithmetic of singular character varieties and their
  {$E$}-polynomials}},
        date={2017},
     journal={Proc. London Math. Soc.},
      volume={114},
      number={2},
       pages={293\ndash 332},
}

\bib{Behrend}{article}{
   author={Behrend, K. A.},
   title={The Lefschetz trace formula for algebraic stacks},
   journal={Invent. Math.},
   volume={112},
   date={1993},
   number={1},
   pages={127\ndash 149}
}

\bib{BD}{article}{
    author={Beilinson, A.},
    author={Drinfeld, V.},
    title={Quantization of Hitchin's integrable system and Hecke eingensheaves},
    date={1997},
    note={\url{http://math.uchicago.edu/~drinfeld/langlands/hitchin/BD-hitchin.pdf}}
  }

\bib{BenZviNadler}{article}{
    author={Ben-Zvi, D.},
    author={Nadler, D.},
    title={Betti geometric Langlands},
    date={2018},
  }

\bib{Cambo}{article}{
  author={Camb\`o, V.},
  title={On the $E$-polynomial of parabolic $\mathrm{Sp}_{2n}$-character varieties},
  date={2017},
  eprint={1708.00393},
  archivePrefix={arXiv},
  primaryClass={math.RT}
}

\bib{Carter1978}{article}{
   author={Carter, R. W.},
   title={Centralizers of semisimple elements in finite groups of Lie type},
   journal={Proc. London Math. Soc. (3)},
   volume={37},
   date={1978},
   number={3},
   pages={491\ndash 507}
}

\bib{CarterBook}{book}{
   author={Carter, R. W.},
   title={Finite groups of Lie type: Conjugacy Classes and Complex Characters},
   series={Pure and Applied Mathematics},
   publisher={John Wiley \& Sons, Inc., New York},
   date={1985}
}

\bib{dCHM}{article}{
   author={de Cataldo, M. A.},
   author={Hausel, T.},
   author={Migliorini, L.},
   title={Topology of Hitchin systems and Hodge theory of character varieties: the case $A_1$},
   journal={Ann. of Math. (2)},
   volume={175},
   date={2012},
   number={3},
   pages={1329\ndash 1407}
}

\bib{Deligne}{article}{
      author={Deligne, P.},
       title={{Th{\'e}orie de Hodge, III}},
        date={1974},
     journal={Publ. Math. Inst. Hautes Études Sci.},
      volume={44},
       pages={5\ndash 77},
}

\bib{Deriziotis85}{article}{
   author={Deriziotis, D. I.},
   title={On the number of conjugacy classes in finite groups of Lie type},
   journal={Comm. Algebra},
   volume={13},
   date={1985},
   number={5},
   pages={1019--1045}
}

\bib{DigneMichel}{book}{
   author={Digne, F.},
   author={Michel, J.},
   title={Representations of finite groups of Lie type},
   series={London Mathematical Society Student Texts},
   volume={95},
   edition={2},
   publisher={Cambridge University Press, Cambridge},
   date={2020}
}

\bib{Fleischmann97}{article}{
   author={Fleischmann, P.},
   title={Finite fields, root systems, and orbit numbers of Chevalley groups},
   journal={Finite Fields Appl.},
   volume={3},
   date={1997},
   number={1},
   pages={33\ndash 47}
}		

\bib{GeckMalle}{book}{
   author={Geck, M.},
   author={Malle, G.},
   title={The character theory of finite groups of Lie type},
   series={Cambridge Studies in Advanced Mathematics},
   volume={187},
   publisher={Cambridge University Press, Cambridge},
   date={2020}
}

\bib{HRV}{article}{
      author={Hausel, T.},
      author={Rodriguez-Villegas, F.},
       title={Mixed Hodge polynomials of character varieties},
        year={2008},
     journal={Invent. Math.},
      volume={174},
      number={3},
       pages={555\ndash 624}
}

\bib{HLRV}{article}{
  author={Hausel, T.},
    author={Letellier, E.},
  author={Rodriguez-Villegas, F.},
  title={Arithmetic harmonic analysis on charater and quiver varieties},
  year={2011},
  journal={Duke Math. J.},
  volume={160},
  pages={323\ndash 400},
  number={2}
}

\bib{Higman}{article}{
author={Higman, G.}, 
title={Enumerating p-Groups, II: Problems Whose Solution is PORC}, 
year={1960}, 
journal={Proc. of London. Math. Soc.}, 
Volume={1}, 
Pages={566--582},
}

\bib{NAM}{article}{
author={Kamgarpour, M.}, 
author={Nam, G.}, 
author={Pusk\'{a}s, A.}, 
title={Arithmetic geometry of character varieties with regular monodromy},
}

\bib{Letellier}{article}{
  author={Letellier, E.},
  title={Character varieties with Zariski closures of $\GL_n$-conjugacy classes at punctures },
  year={2015},
 journal={Selecta Math.},
 Volume={21},
 Pages={293--344},
}

\bib{LetellierRodriguez}{article}{
  author={Letellier, E.},
  author={Rodriguez-Villegas, F.},
  title={$E$-series of character varieties of non-orientable surfaces},
  year={2020},
  eprint={2008.13435},
  archivePrefix={arXiv},
  primaryClass={math.RT}
}

\bib{Li}{article}{
   author={Li, J.},
   title={The space of surface group representations},
   journal={Manuscripta Math.},
   volume={78},
   date={1993},
   number={3},
   pages={223\ndash 243}
}

\bib{LS}{article}{
author={Liebeck, M.}, 
author={Shalev, A.},
title={Fuchsian groups, finite simple groups and
representation varieties}, 
Journal={Inven. Math.}, 
Year={2005}, 
Volume={159}, 
Pages={317-367},
}

\bib{L}{article}{
author={Luebeck, F.}, 
title={Character Degrees and their Multiplicities for some Groups of Lie Type of Rank $< 9$}, 
note={\url{http://www.math.rwth-aachen.de/~Frank.Luebeck/chev/DegMult/index.html}},
}

\bib{Lusztig84}{book}{
   author={Lusztig, G.},
   title={Characters of reductive groups over a finite field},
   series={Annals of Mathematics Studies},
   volume={107},
   publisher={Princeton University Press, Princeton, NJ},
   date={1984}
}

\bib{Lusztig93}{article}{
   author={Lusztig, G.},
   title={Coxeter groups and unipotent representations},
   note={Repr\'{e}sentations unipotentes g\'{e}n\'{e}riques et blocs des groupes
   r\'{e}ductifs finis},
   journal={Ast\'{e}risque},
   number={212},
   date={1993},
   pages={191\ndash203}
 }

  \bib{Mellit}{article}{
    author={Mellit, A.},
    title={Poincar\'e polynomials of character varieties, Macdonald polynomials and affine Springer fibres},
    date={2020},
    journal={Ann. of Math. (2)}
    series={2},
    volume={192},
    number={1}
  }
  
  \bib{Mereb}{article}{
   author={Mereb, Martin},
   title={On the $E$-polynomials of a family of $\mathrm{SL}_n$-character
   varieties},
   journal={Math. Ann.},
   volume={363},
   date={2015},
   number={3-4},
   pages={857\ndash892}
}

\bib{Chernosouv}{article}{
author={Rapinchuk, A. S.}, 
author={Benyash-Krivetz, V. V.}, 
author={Chernousov, V. I.}, 
title={Representation varieties of fundamental groups of compact surfaces}, 
journal={Israel J. of Math.}, 
volume={93}, 
pages={29--71}, 
year={1996},
}

\bib{Shende}{article}{
   author={Shende, V.},
   title={The weights of the tautological classes of character varieties},
   journal={Int. Math. Res. Not. IMRN},
   date={2017},
   number={22},
   pages={6832\ndash 6840}
}

\bib{SimpsonICM}{article}{
   author={Simpson, C. T.},
   title={Nonabelian Hodge theory},
   conference={
      title={Proceedings of the International Congress of Mathematicians,
      Vol. I, II},
      address={Kyoto},
      date={1990},
   },
   book={
      publisher={Math. Soc. Japan, Tokyo},
      editor={Satake, I.}
   },
   date={1991},
   pages={747\ndash 756}
}

\bib{SimpsonModuli}{article}{
   author={Simpson, C. T.},
   title={Moduli of representations of the fundamental group of a smooth projective variety. I},
   journal={Inst. Hautes \'{E}tudes Sci. Publ. Math.},
   number={79},
   date={1994},
   pages={47\ndash129},
}

\bib{SpringerSteinberg}{article}{
   author={Springer, T.A.},
   author={Steinberg, R.}
   title={Conjugacy Classes},
   conference={
      title={Seminar on Algebraic Groups and Related Finite Groups},
      address={The Institute for Advanced Study, Princeton, N.J.},
      date={1968/69},
   },
   book={
      series={Lecture Notes in Mathematics, Vol. 131},
      volume={131},
      publisher={Springer, Berlin},
      editor={Oold, A.},
      editor={Eckmann, B.}
   },
   date={1970},
   pages={167\ndash 214}
}

\bib{Vinroot}{article}{
author={Trefethen, S.}, 
author={Vinroot, C. R.}, 
title={A computational approach to the Frobenius–Schur indicators of
finite exceptional groups}, 
journal={Internat. J. Algebra Comput.}, 
year={2020}, 
volume={1}, 
pages={141\ndash 166},
}

\end{biblist}
\end{bibdiv}

 \end{document}